\theoremstyle{definition}
\newtheorem{theorem}{Theorem}[section]
\newtheorem{lemma}[theorem]{Lemma}
\newtheorem{proposition}[theorem]{Proposition}
\newtheorem{corollary}[theorem]{Corollary}
\theoremstyle{definition}
\theoremstyle{definition}
\newtheorem{definition}[theorem]{Definition}
\newtheorem{example}[theorem]{Example}
\begin{document}
\baselineskip=17pt
\title[]{On $\theta$-Hurewicz and $\alpha$-Hurewicz Topological spaces}
\author[Gaurav Kumar${^1}$, Sumit Mittal$^{2}$, Brij K. Tyagi]{ Gaurav Kumar${^1}$, Sumit Mittal$^{2}$, Brij K. Tyagi}
\address{ Gaurav Kumar \newline 
	\indent Department of Mathematics\indent \newline\indent University of Delhi\newline\indent 
	Delhi -- 110007, India.}
\email{gauravkumar.maths@dsc.du.ac}
\address{ Sumit Mittal \newline\indent 
Department of Mathematics\newline\indent University of Delhi\newline\indent 
Delhi -- 110007, India.}
\email{sumitmittal1105@maths.du.ac.in}
\address{Brij K. Tyagi\newline\indent Atma Ram Sanatan Dharma College\newline\indent University of Delhi -- 110021, India.}
\email{brijkishore.tyagi@gmail.com}

\date{}
\thanks{}
\begin{abstract} 
\noindent In this paper, we introduced  $\alpha$-Hurewicz $\&$  $\theta$-Hurewicz   properties  in a topological space $X$  and investigated their relationship with other selective covering properties. We have shown that for an extremally disconnected semi-regular spaces, the properties:  Hurewicz, semi-Hurewicz, $\alpha$-Hurewicz, $\theta$-Hurewicz, almost-Hurewicz, nearly Hurewicz and midly Hurewicz are equivalent. We have also proved that for an extremally disconnected space X, every finite power of X has $\theta$-Hurewicz property if and only if X has the selection principle $U_{fin}(\theta$-$\Omega, \theta$-$\Omega)$. The preservation  under several types of mappings  of $\alpha$-Hurewicz and $\theta$-Hurewicz properties are also discussed. Also, we showed that, if $X$ is a mildly Hurewicz subspace of $ \omega^\omega$, than $X$ is  bounded.

\end{abstract}
\subjclass[2020]{Primary 54D20; 54C08;  Secondary 54A10; 54D10 }

\keywords{Selection principles; Hurewicz; $\alpha$-Hurewicz; $\theta$-Hurewicz; $\theta$-continuity; extremally disconnected space.}

\maketitle
\section {Introduction and preliminaries}

The  classical Hurewicz property has a  long history from the paper \cite{H}. A topological space $X$ has Hurewicz property if for each sequence $(\mathcal{A}_k : {k \in\mathbb{N}})$ of open covers of $X$ there exists a sequence $(\mathcal{B}_k: {k \in\mathbb{N}})$ where   for each $k,$ $\mathcal{B}_k$ is a ﬁnite subset of $\mathcal{A}_k$ such that for each $x\in X$, $x\in \bigcup \mathcal{B}_k$ for all but finitely many $k.$ The Hurewicz property is weaker than the $\sigma$-compactness and stronger than  the Lindel$\ddot{o}$f property.\footnote{The    author$^1$ acknowledges the fellowship grant of University Grant Commission, India.
	
	 \indent  The    author$^2$ acknowledges the fellowship grant of Council of Scientific $\&$ Industrial Research, India.}

Recently, several weak variants of Hurewicz property have been studied after applying the interior
and the closure operators in the definition of a Hurewicz Property. Also, the other ways
have been examined when the sequence of open covers are replaced with generalized
open sets. For the study of the  variants of Hurewicz spaces, the readers can see \cite{AZ, XYZ, KSDS, ZZ, G, BN, MM}.

In this paper, we examine  the covering properties namely: $\alpha$-Hurewicz, $\theta$-Hurewicz which are alike to the classical Hurewicz property. 

The following generalizations of open sets will be used for definitions of variations on the Hurewicz property:

A subset $A$ of a topological   space $X$ is said to be:
\begin{itemize}
	\item $\theta$-open  \cite{B} if for every $x$ in  $A,$ there exists an open set $B$ in $X$ such that $x \in B \subset Cl({B}) \subset A$.
	\item $\alpha$-open \cite{ND} if $A\subset Int(Cl(Int(A))),$  or equivalently, if there exists an open set $B$ in $X,$ such that $B \subset A \subset Int(Cl(B))$. The complement of $\alpha$-open set is $\alpha$-closed. Moreover, a set $A$ is $\alpha$-closed in $X$ if $Cl(Int(Cl(A))) \subseteq A$. 
	\item semi-open \cite{LN} if there exists an open set $B$ in $X$ such that $B \subset A \subset Cl(B),$ or equivalently, if $A \subset Cl(Int(A))$. The set   $SO(X)$ denotes the collection  of all semi-open sets. The complement of a semi-open set is semi-closed, $sCl(A)$ denotes the semi-closure of  $A,$ $sCl(A)$  is intersection of all semi-closed sets containing $A$. $A\subseteq X$ is semi-closed if and only if $sCl(A) = A$.
\end{itemize}
Clearly, we have:
\begin{center}
	clopen $\Rightarrow$ $\theta$-open $\Rightarrow$ open $\Rightarrow \alpha$-open  $\Rightarrow$ semi-open.\end{center}

A space $X$ is said to be  $\alpha$-compact \cite{M} ($\theta$-compact \cite{KD}) if every $\alpha$-open ($\theta$-open) cover of $X$ has a finite subcover.\\ 
\indent  Recall that a space $X$ is said to be semi-regular \cite{T} if for each $x\in X$ and  for each semi-closed set $A$ such that $x\not\in A$, there exist disjoint
semi-open sets $B$ and $C$ of $X$ such that $x \in B $ and $A \subset C $.
\begin{lemma} \label{PP}\cite{T} For a space $X$ the following statements are equivalent:
	\item(i) $X$ is semi-regular;
	\item(ii) For each $x \in X $ and $A\in SO(X)$ such that $x \in A$, there exists a  $B \in SO(X)$ such that $x \in B \subset sCl({B}) \subset A$.
\end{lemma}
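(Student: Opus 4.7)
The plan is to prove the equivalence directly by passing back and forth between semi-closed sets and their semi-open complements, mirroring the classical argument that regularity is characterized by closed neighbourhood bases. The algebraic input I will use throughout is that the complement of a semi-open set is semi-closed and vice versa, together with the universal property of the semi-closure: $sCl(B)$ is the smallest semi-closed set containing $B$, so any inclusion $B\subset D$ with $D$ semi-closed forces $sCl(B)\subset D$.

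For the direction (i) $\Rightarrow$ (ii), I start with $x\in A$ and $A\in SO(X)$. Since $A$ is semi-open, $X\setminus A$ is semi-closed and does not contain $x$. Applying the semi-regularity hypothesis to the point $x$ and the semi-closed set $X\setminus A$ yields disjoint semi-open sets $B,C$ with $x\in B$ and $X\setminus A\subset C$. From $B\cap C=\emptyset$ I get $B\subset X\setminus C$; since $X\setminus C$ is semi-closed, the universal property of $sCl$ gives $sCl(B)\subset X\setminus C\subset A$, which is exactly the inclusion demanded in (ii).

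For the converse (ii) $\Rightarrow$ (i), I take $x\in X$ and a semi-closed set $A$ with $x\notin A$, so $X\setminus A$ is a semi-open neighbourhood of $x$. Applying (ii) produces $B\in SO(X)$ with $x\in B\subset sCl(B)\subset X\setminus A$. I then set $C:=X\setminus sCl(B)$, which is semi-open as the complement of a semi-closed set. By construction $B\cap C=\emptyset$, $x\in B$, and $A\subset C$, so $B$ and $C$ are the disjoint semi-open sets required by the definition of semi-regularity.

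The whole argument is essentially bookkeeping with complements. The only step that needs a moment of care is the repeated appeal to the universal property of $sCl$, and this is immediate from the definition recorded in the introduction; so I do not anticipate any real obstacle. If a subtlety arises, it will be only in verifying carefully that the passage from a semi-open cover $\{B\}$ to its semi-closure $sCl(B)$ does not accidentally enlarge the set beyond $A$, which is precisely what the semi-closed/semi-open duality rules out.
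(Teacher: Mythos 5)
Your argument is correct: both directions are the standard complement-passing argument, and the one fact you lean on beyond the definitions --- that $sCl(B)$ is itself semi-closed, so that $X\setminus sCl(B)$ is semi-open in the direction (ii) $\Rightarrow$ (i) --- holds because arbitrary unions of semi-open sets are semi-open. The paper itself gives no proof (the lemma is quoted from Dorsett's paper on semi-regular spaces), so there is nothing to compare against; your write-up is exactly the expected argument.
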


\begin{definition}
	\textnormal{A space $X$ is called semi-Hurewicz \cite{KSDS}   ( resp.,  mildly Hurewicz \cite{MM}, $\theta$-Hurewicz, $\alpha$-Hurewicz )  if for each sequence $( \mathcal{A}_k : {k \in\mathbb{N}})$  of semi-open ( resp., clopen, $\theta$-open, $\alpha$-open ) covers of $X$, there exists a sequence $(\mathcal{B}_k : {k \in\mathbb{N}})$ such that  for each $k\in\mathbb{N},$  $\mathcal{B}_k$ is a finite subset of $\mathcal{A}_k$ and for each $x\in X$, $x\in\bigcup \mathcal{B}_k$ for all but finitely many $k.$}
\end{definition}
Evidently, we have the following implications:
\begin{center}
	semi-Hurewicz $\Rightarrow$ $\alpha$-Hurewicz  $\Rightarrow$ Hurewicz $\Rightarrow$ $\theta$-Hurewicz $\Rightarrow$ Mildly Hurewicz.
\end{center}
The above properties are also  written in the form of  selection principle. Let
$\mathcal{A}$ and $\mathcal{B}$ be the collections  of subsets of a space $X$. Then  a space $X$ satisfies the selection principle :
$U_{fin}(\mathcal{A}, \mathcal{B})$ if for each sequence $(\mathcal{A}_k : k \in\mathbb{N})$ in $\mathcal{A}$ there exists a sequence $(\mathcal{B}_k : k \in \mathbb{N}),$ where for each $k,$ $\mathcal{B}_k$ is a finite subset of $\mathcal{A}_k$  such that  $\{\cup\mathcal{B}_k: k = 1,2,3,. . .\}$  is in $\mathcal{B}$  \cite{X}.
An infinite cover $\mathcal{C}$ of $X$ is said $\gamma$-cover (resp., $c$-$\gamma$-cover, $\theta$-$\gamma$-cover, $\alpha$-$\gamma$-cover $s$-$\gamma$-cover) if each element of $\mathcal{C}$ are open $(resp, clopen$, $\theta$-open, $\alpha$-open, semi-open) such that  for each $x\in X$, the set $\{U \in \mathcal{C} : x\not\in U\}$ is finite. Let $\Gamma$, $c$-$\Gamma$ $\theta$-$\Gamma$, $\alpha$-$\Gamma $ $s$-$\Gamma$ denotes the collection of all $\gamma$, $c$-$\gamma$, $\theta$-$\gamma$, $\alpha$-$\gamma$ $s$-$\gamma$ covers of $X,$ respectively and $\mathcal{O},$ $\mathcal{CO}$,   $\theta$-$\mathcal{O}$, $\alpha$-$\mathcal{O}$,  s-$\mathcal{O}$   denotes the collection of all open, clopen,    $\theta$-open, $\alpha$-open, semi-open covers of a space $X$, respectively. Then the
Hurewicz, mildly Hurewicz, $\theta$-Hurewicz, $\alpha$-Hurewicz, semi-Hurewicz property of $X$ is equivalent to selection principles: $U_{fin}(\mathcal{O}, \Gamma)$, $U_{fin}(\mathcal{CO}$, $c$-$\Gamma)$, $U_{fin}(\theta$-$\mathcal{O},$ $\theta$-$\Gamma)$, $U_{fin}(\alpha$-$\mathcal{O}$, $\alpha$-$\Gamma)$, $U_{fin}(s$-$\mathcal{O}$, $s$-$\Gamma)$, respectively. In this paper we study $\alpha$-Hurewicz and $\theta$-Hurewicz properties in details.

Throughout  the paper a space $X$ and $(X, \tau)$,  means a topological space $\&$ $|X|$ denotes the cardinality of $X$. For a subset    $A$ of a  space $X$, $Int(A)$ and   $\overline{A}$ or $ Cl({A})$,  denotes the  interior and closure of $A$ respectively. Further, $\omega$ and $\omega_1$ denote the first infinite cardinal $\&$ uncountable cardinal respectively.

\section{ The $\theta$-Hurewicz Spaces and $\alpha$-Hurewicz Spaces}   

First, recall that the family of all $\theta$-open (resp., $\alpha$-open) sets of a space $(X, \tau)$  are form topologies on $X,$ denoted by $\tau_\theta$  \cite{J} (resp., $\tau_\alpha$ \cite{ND}.
Further, $\tau_\theta \subseteq \tau\subseteq   \tau_\alpha$.  The role of $\theta$-open and $\alpha$-open sets have been invastigated in many papers (see, \cite{M, KD})

Clearly, a space $(X, \tau)$ is $\theta$-Hurewicz (resp., $\alpha$-Hurewicz) if and only if the space $(X, \tau_\theta)$ (resp., $(X, \tau_\alpha$) is Hurewicz.

\begin{theorem}\label{BKM}
	Every countable space $X$ has $\alpha$-Hurewicz property.
\end{theorem}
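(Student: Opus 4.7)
The plan is to exploit countability directly and never touch the $\alpha$-open structure beyond the defining covering property. Enumerate $X = \{x_n : n \in \mathbb{N}\}$ (if $X$ is finite the argument simplifies in the obvious way). Given a sequence $(\mathcal{A}_k : k \in \mathbb{N})$ of $\alpha$-open covers of $X$, I would define the finite selections $\mathcal{B}_k \subseteq \mathcal{A}_k$ by a diagonal scheme keyed to the enumeration.

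Specifically, at stage $k$, for each $i \in \{1, 2, \ldots, k\}$, use the fact that $\mathcal{A}_k$ is a cover of $X$ to pick some $U_i^k \in \mathcal{A}_k$ with $x_i \in U_i^k$. Set $\mathcal{B}_k = \{U_1^k, U_2^k, \ldots, U_k^k\}$, which is a finite subset of $\mathcal{A}_k$ of size at most $k$.

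To verify the $\alpha$-Hurewicz condition, fix an arbitrary $x \in X$ and let $n$ be its index, i.e.\ $x = x_n$. For every $k \geq n$, the index $n$ lies in $\{1, \ldots, k\}$, so $U_n^k \in \mathcal{B}_k$ and hence $x = x_n \in U_n^k \subseteq \bigcup \mathcal{B}_k$. Thus $x \in \bigcup \mathcal{B}_k$ for all but finitely many $k$ (indeed, for all $k \geq n$), which is exactly what the $\alpha$-Hurewicz property requires.

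There is no real obstacle in this argument: the only property of $\alpha$-open covers that is used is that they cover $X$, so the same proof in fact establishes the analogous statement for Hurewicz, $\theta$-Hurewicz, semi-Hurewicz and mildly Hurewicz simultaneously. The essential combinatorial ingredient is the standard trick of at stage $k$ taking care of the first $k$ points of a fixed enumeration, which guarantees that every point is covered from some index onward.
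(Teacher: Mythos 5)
Your proof is correct and coincides with the paper's own argument: both select, at stage $k$, one member of $\mathcal{A}_k$ covering each of $x_1,\dots,x_k$, and observe that $x_n$ is then covered for all $k\geq n$. Your additional remark that the argument only uses the covering property (and hence works verbatim for the $\theta$-Hurewicz and other variants) matches the paper's follow-up comment after the proof.
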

\begin{proof}
	Let $X=\{x_1, x_2, ...., x_n,....\}$ be a countable space. Let $<\mathcal{A}_k>_{k \in\mathbb{N}}$ be  a sequence of $\alpha$-open covers of $X. $ For each $k\in \mathbb{N},$ Consider $\mathcal{B}_k = \{A_{k,1}, A_{k,2},......A_{k,k}\},$ where for each $i\in \{1,2,....k\},$ $A_{k,i}$ is  $\alpha$-open such that $x_i\in A_{k,i}.$  Then $\mathcal{B}_k$ is a finite subset of $\mathcal{A}_k$ and for each $x\in X$, $x\in\bigcup \mathcal{B}_k$ for all but finitely many $k.$
\end{proof}
Similarly we can prove that every countable space has $\theta$-Hurewicz property.

\begin{example} \begin{enumerate}
		\item Every $\alpha$-compact space is $\alpha$-Hurewicz.  But the converse is not true. The real line $\mathbb{R}$ with the cocountable topology is   $\alpha$-Hurewicz being  semi-Hurewicz \cite{KSDS} but it is not $\alpha$-Compact since every $\alpha$-compact space is compact.
		\item Every $\theta$-compact space is $\theta$-Hurewicz.  But the converse is not true. Let $X$ be a countably infinite discrete space. Then the space $X$ has  $\theta$-Hurewicz property but the $\theta$-open cover $\{\{x\}: x\in X\}$ has no finite subcover.
		\item  The real line $\mathbb{R}$ is a Hurewicz space but it is not semi-Hurewicz \cite{KSDS}.
		\item The Sorgenfrey line $S$  does not have the  $\alpha$-Hurewicz property because it does not have  the Hurewicz property.
	\end{enumerate}
\end{example}

\begin{example} Let $A$ be a finite subset of an uncountable set $X.$ Then  $\tau = \{\phi,A, X\}$ is a  topology on $X.$ Clearly, the  space  $(X, \tau)$ is  Hurewicz. Moreover, the sets of the forms $A \cup\{p\},$ for  $p$ $\in X \setminus A$, are $\alpha$-open in $(X, \tau)$.  For each $k\in \mathbb{N}, $ put  $\mathcal{A}_k  = \{A \cup \{p\}: p\in X\setminus A \}.$  Then the sequence  $(\mathcal{A}_k : {k \in\mathbb{N}}) $  witnesses that $(X, \tau)$ is not an $\alpha$-Hurewicz space because  the cover $\mathcal{A}_k$ does not have a countable subcover.   
\end{example}
\begin{example}
	Let $p$ be a fixed point of an uncountable set $X$. Then $\tau_p = \{O \subseteq X : p \in O  \}$ together with the empty set is an uncountable particular point topology on $X$. It is shown in  \cite{L} that the space $X$ is not   Lindelöf, so $X$ can not be Hurewicz since every Hurewicz space is Lindel$\ddot{o}$f.  Note that the space $X$ is an only closed set containing $p.$   Then $Cl(A) = X$ for each $A\not=\emptyset$, $A\in \tau_p.$ Hence  $\phi$ and $X$ are  only $\theta$-open sets. Therefore $X$ is $\theta$-Hurewicz space.
\end{example}
A space $X$ is said to be nearly Hurewicz \cite{BN} (resp., almost Hurewicz \cite{G}) if for each sequence $(\mathcal{A}_k : k\in \mathbb{N})$ of open covers of $X$, there exists a sequence $(\mathcal{B}_k : k\in \mathbb{N})$, where for each $k\in \mathbb{N},$ $\mathcal{B}_k$ is a finite subset of $\mathcal{A}_k$ such that  for each $x\in X$, $x\in \cup\{Int(Cl(B)) : B\in \mathcal{B}_k\}$ (resp., $x\in \cup\{Cl(B) : B\in \mathcal{B}_k\})$ for all but finitely many $k$.

Evidently, from the definitions follows the following implications:
\begin{center} Hurewicz $\Rightarrow$  nearly Hurewicz $\Rightarrow$ almost Hurewicz.\end{center}

The following theorem describes a relation between almost Hurewicz and $\theta$-Hurewicz space
\begin{theorem}
	Every almost Hurewicz space is $\theta$-Hurewicz.
\end{theorem}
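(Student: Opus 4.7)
The plan is to reduce $\theta$-Hurewicz to almost Hurewicz by replacing each $\theta$-open cover with an auxiliary open cover whose closures refine the given $\theta$-open sets. The mechanism is built directly into the definition of $\theta$-openness: if $A$ is $\theta$-open and $x \in A$, there is an open $B$ with $x \in B \subset Cl(B) \subset A$.

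In detail, suppose $X$ is almost Hurewicz and let $(\mathcal{A}_k : k \in \mathbb{N})$ be a sequence of $\theta$-open covers of $X$. For each $k$ and each $x \in X$, I would fix some $A_{k,x} \in \mathcal{A}_k$ containing $x$ and, using $\theta$-openness of $A_{k,x}$, pick an open set $B_{k,x}$ with $x \in B_{k,x} \subset Cl(B_{k,x}) \subset A_{k,x}$. Then $\mathcal{U}_k = \{B_{k,x} : x \in X\}$ is an open cover of $X$ and each of its members has closure contained in some element of $\mathcal{A}_k$ by construction.

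Applying the almost Hurewicz property to $(\mathcal{U}_k)$ gives finite subfamilies $\mathcal{V}_k \subset \mathcal{U}_k$ such that for every $x \in X$, $x \in \bigcup\{Cl(V) : V \in \mathcal{V}_k\}$ for all but finitely many $k$. For each $V \in \mathcal{V}_k$ we have $V = B_{k,y}$ for some $y$, and we set $\mathcal{B}_k = \{A_{k,y} : B_{k,y} \in \mathcal{V}_k\}$, a finite subset of $\mathcal{A}_k$. Since $Cl(B_{k,y}) \subset A_{k,y}$, whenever $x \in \bigcup\{Cl(V) : V \in \mathcal{V}_k\}$ we automatically have $x \in \bigcup \mathcal{B}_k$, so $(\mathcal{B}_k)$ witnesses the $\theta$-Hurewicz property.

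There is no real obstacle beyond bookkeeping: the only subtle point is that multiple $x$'s may select the same $A_{k,x}$, so one must be careful to view $\mathcal{B}_k$ as a set (obtained by projecting through the chosen indexing $x \mapsto (B_{k,x}, A_{k,x})$) to keep it finite. Everything else is a direct application of the definitions.
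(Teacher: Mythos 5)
Your proposal is correct and follows essentially the same route as the paper's proof: build an open refinement $\{B_{k,x}\}$ of each $\theta$-open cover using the defining property of $\theta$-openness, apply the almost Hurewicz selection to these open covers, and then pass back to the original covers via the containments $Cl(B_{k,x})\subset A_{k,x}$. The bookkeeping point you flag (treating $\mathcal{B}_k$ as a finite set via the chosen assignment) is handled the same way in the paper.
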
 
\begin{proof}
	Let $X$ be an almost Hurewicz space and $(\mathcal{A}_k : k \in \mathbb{N})$ be a sequence of $\theta$-open covers of $X$. Then for each $k \in \mathbb{N}$ and each $x\in X$ there is an  open set $B_{x, k}$ such that $x\in B_{x,k}\subset Cl(B_{x,k})\subset A_k$ for some $A_k\in \mathcal{A}_k$.  For each $k$, put $\mathcal{B}_k = \{B_{x,k} : x\in X\}$. Then  each $\mathcal{B}_k$ is an open cover of $X$. Since $X$ is an almost Hurewicz, for each $k\in \mathbb{N},$ there is a finite subset $\mathcal{B}'_k$ of $\mathcal{B}_k$ such that $x\in X$, $x\in \cup\{Cl(B') : B' \in \mathcal{B}'_k\}$ for all but finitely many $k.$  Since for each $B'\in \mathcal{B}'_k$, there is a $A'_{k,B'}\in \mathcal{A}_k $  such that $Cl(B') \subset A'_{k, B'}.$ Let $\mathcal{A}'_k = \{A_{k, B'}\in \mathcal{A}_k : B'\in \mathcal{B}'_k\}$. Then  the sequence $(\mathcal{A}'_k : k\in \mathbb{N})$ witnesses  that $X$ is $\theta$-Hurewicz.
\end{proof}
Next we determine a class of spaces in which above  variants of  Hurewicz property are equivalent. Recall that, a space $X$ is called extremally disconnected if closure of open set is open. 

\begin{theorem}\label{XVB}
	For  an extremally disconnected semi-regular space $X$. The following statements are equivalent:
	\begin{enumerate}
		\item  $X$ is semi-Hurewicz;
		\item  $X$ is $\alpha$-Hurewicz;
		\item   $X$ is Hurewicz;
		\item  $X$ is nearly Hurewicz;
		\item   $X$ is almost Hurewicz;
		\item  $X$ is $\theta$-Hurewicz;
		\item  $X$ is  mildly Hurewicz.
	\end{enumerate}   
\end{theorem}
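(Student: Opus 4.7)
\medskip

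\noindent\textbf{Proof plan.}
The plan is to close a single cycle of implications through all seven properties. The preamble already records
\[
(1)\Rightarrow(2)\Rightarrow(3)\Rightarrow(6)\Rightarrow(7)\quad\text{and}\quad (3)\Rightarrow(4)\Rightarrow(5),
\]
and the preceding theorem gives $(5)\Rightarrow(6)$. Hence all that is left is the implication $(7)\Rightarrow(1)$: in an extremally disconnected semi-regular space, mildly Hurewicz implies semi-Hurewicz. This will close the loop and prove the theorem.

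The key observation I intend to prove first is the following local fact: \emph{in an extremally disconnected semi-regular space, every semi-open neighborhood of a point contains a clopen neighborhood of that point.} Given $x\in A$ with $A$ semi-open, Lemma \ref{PP}(ii) furnishes a semi-open $B$ with $x\in B\subset sCl(B)\subset A$. Since $B$ is semi-open, $Cl(B)=Cl(Int(B))$, and extremal disconnectedness makes $Cl(Int(B))$ clopen. Using the standard identity $sCl(B)=B\cup Int(Cl(B))$ together with $Int(Cl(B))=Cl(B)$ (as $Cl(B)$ is already open), I obtain $sCl(B)=Cl(B)$, a clopen set. Setting $V=sCl(B)$ yields $x\in V\subseteq A$ with $V$ clopen.

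Granted the observation, the proof of $(7)\Rightarrow(1)$ is a routine refinement argument. Let $(\mathcal{A}_k:k\in\mathbb{N})$ be a sequence of semi-open covers of $X$. For each $k$ and each $x\in X$ pick $A(k,x)\in\mathcal{A}_k$ with $x\in A(k,x)$ and, by the observation, a clopen $V(k,x)$ with $x\in V(k,x)\subseteq A(k,x)$. Then $\mathcal{V}_k=\{V(k,x):x\in X\}$ is a clopen cover of $X$. Applying the mildly Hurewicz hypothesis yields finite $\mathcal{W}_k\subseteq\mathcal{V}_k$ such that every $x\in X$ lies in $\bigcup\mathcal{W}_k$ for all but finitely many $k$. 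For each $W=V(k,x_W)\in\mathcal{W}_k$ select the corresponding $A(k,x_W)\in\mathcal{A}_k$ and set $\mathcal{B}_k=\{A(k,x_W):W\in\mathcal{W}_k\}$; this is a finite subfamily of $\mathcal{A}_k$ with $\bigcup\mathcal{W}_k\subseteq\bigcup\mathcal{B}_k$, so the sequence $(\mathcal{B}_k)$ witnesses the semi-Hurewicz property.

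The only delicate point, and where I expect the verification to require care, is the observation itself: specifically the identification $sCl(B)=Cl(B)$ for semi-open $B$ in an extremally disconnected space. One must justify both the semi-closure formula $sCl(B)=B\cup Int(Cl(B))$ and the fact that, under extremal disconnectedness, $Cl(B)=Cl(Int(B))$ is clopen; once these two facts are in hand, the rest of the argument is bookkeeping. Everything else reduces to concatenating already stated implications.
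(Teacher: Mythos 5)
Your proposal is correct and follows essentially the same route as the paper: the forward chain $(1)\Rightarrow\cdots\Rightarrow(7)$ is quoted from the earlier implications, and the substance is $(7)\Rightarrow(1)$, obtained by using Lemma \ref{PP} to shrink each semi-open cover and extremal disconnectedness to replace the shrunken sets by clopen sets ($sCl(B)=Cl(B)=Cl(Int(B))$, clopen, contained in the original cover element), then applying the mildly Hurewicz hypothesis and pulling back. The only cosmetic difference is that you isolate this as a local fact about semi-open neighborhoods, whereas the paper manipulates the covers directly via $Cl(Int(Cl(B)))$, which is the same set.
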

\begin{proof} Already proved (1) $\Rightarrow$ (2) $\Rightarrow$ (3) $\Rightarrow$ (4) $\Rightarrow$ (5) $\Rightarrow$ (6) $\Rightarrow$ (7).
	
	For (7) $\Rightarrow$ (1),
	let $(\mathcal{A}_k : k\in \mathbb{N})$ be a sequence of semi-open covers of $X.$ Then by Lemma \ref{PP}, for each $x \in X $,  we have a $B_{k,x} \in SO(X)$ such that $x\in B_{k,x} \subset sCl(B_{k,x}) \subset A$ for some $A \in \mathcal{A}_k,$.   For  $k\in \mathbb{N},$ put $\mathcal{B}_k = \{B_{k,x}: x\in X\}.$   Then $(\mathcal{B}_k : k\in \mathbb{N})$  is a sequence of semi-open covers of $X.$
	As $X$ is an  extremally disconnected, by \cite[Proposition 4.1]{U}, we have  $B\subset Int(Cl(B)). $   For each $B\in SO(X)$, $Cl( Int(Cl({B}))) $ is clopen in $X.$ Put $\mathcal{C}_k = \{Cl(Int(Cl({B}))) : B\in \mathcal{B}_k\}.$ Then $(\mathcal{C}_k : k\in \mathbb{N})$ is a sequence of clopen covers of $X.$ As   $X$ is    mildly Hurewicz,  there exists a sequence $(\mathcal{C}'_k : k\in \mathbb{N}),$ where for each $k,$ $\mathcal{C}'_k$ is a finite subset of $\mathcal{C}_k$  such that  for $x\in X$, $x\in \cup{\mathcal{C}'_k}$ for all but finitely many $k.$ Observe that for each subset $A$ of   $X$, $Int(Cl(A))\subset sCl(A)$  and from the  extremal disconnectedness of $X$, $sCl(A)  = Cl(A)$ for each $A\in SO(X).$  From the above construction,  for each $C' \in \mathcal{C}'_k$ we have a $A_{C'} \in \mathcal{A}_k$ such that $C'\subset A_{C'}.$  Then for  $k\in\mathbb{N}$, let $\mathcal{A}'_k = \{A_{C'} : C'\in \mathcal{C}'_k\}$. Hence  the sequence ($\mathcal{A}'_k : k\in \mathbb{N}$) witnesses  that  $X$ is semi-Hurewicz.  
\end{proof}

In the following examples, we show that the extremal disconnectedness and semi-regularity are neccessary conditions  in     Theorem \ref{XVB}.
\begin{example} Consider the real line $\mathbb{R}$ with usual topology.  Then $\mathbb{R}$   is   semi-regular mildly Hurewicz space but it is not an  extremally disconnected space .  On the other hand,   $\mathbb{R}$ is not  semi-Hurewicz  \cite{KSDS}. 
\end{example}
\begin{example} Let  $X$ be an uncountable cofinite space, that means an uncountable set $X$ with cofinite topology. Then $X$ is an extremally disconnected mildly Hurewicz space. But $X$ does not have  semi-Hurewicz property, since the  semi-open  cover $\{ X\setminus\{x\} : x\in X\}$ has no countable subcover.  
	
\end{example}   In an  extremally disconnected space, zero-dimensionality  and  semi-regularity are equivalent (\cite{MNA}, Theorem 6.4). We have the following corollary: 
\begin{corollary}\label{AZ}
	For  an  extremally disconnected, zero-dimensional  space $X,$ the following statements are equivalent:
	\begin{enumerate}
		\item   $X$ is   semi-Hurewicz;
		\item  $X$ is $\alpha$-Hurewicz;
		\item  $X$ is Hurewicz;
		\item $X$ is nearly Hurewicz;
		\item  $X$ is almost Hurewicz
		\item   $X$ is $\theta$-Hurewicz;
		\item   $X$ is mildly Hurewicz.
	\end{enumerate}
\end{corollary}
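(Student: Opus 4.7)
The plan is to deduce Corollary \ref{AZ} directly from Theorem \ref{XVB} by invoking the cited equivalence between zero-dimensionality and semi-regularity for extremally disconnected spaces.

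First, I would note the hypothesis: $X$ is extremally disconnected and zero-dimensional. By the cited result (\cite{MNA}, Theorem 6.4), in the presence of extremal disconnectedness, zero-dimensionality is equivalent to semi-regularity. Consequently, $X$ is an extremally disconnected semi-regular space, which is precisely the hypothesis of Theorem \ref{XVB}.

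Next, I would apply Theorem \ref{XVB} directly: it asserts the equivalence of exactly the seven properties listed in the corollary (semi-Hurewicz, $\alpha$-Hurewicz, Hurewicz, nearly Hurewicz, almost Hurewicz, $\theta$-Hurewicz, mildly Hurewicz). Since the list of properties in the corollary matches the list in Theorem \ref{XVB} verbatim, the equivalence (1) $\Leftrightarrow$ (2) $\Leftrightarrow \cdots \Leftrightarrow$ (7) transfers without further work.

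There is no substantive obstacle here: the corollary is essentially a restatement of Theorem \ref{XVB} under a slightly different but equivalent pair of hypotheses. The only thing to be careful about is making explicit that the cited equivalence from \cite{MNA} supplies semi-regularity from zero-dimensionality (and extremal disconnectedness), so that the application of Theorem \ref{XVB} is justified. Thus the proof reduces to two lines: (i) $X$ is semi-regular by \cite[Theorem 6.4]{MNA}, and (ii) apply Theorem \ref{XVB}.
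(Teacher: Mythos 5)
Your proposal is correct and matches the paper's own (implicit) argument exactly: the paper states immediately before the corollary that zero-dimensionality and semi-regularity coincide for extremally disconnected spaces (\cite{MNA}, Theorem 6.4), and then derives the corollary from Theorem \ref{XVB}. Nothing further is needed.
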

A space $X$ is called $S$-paracompact \cite{A} if for every
open cover of $X$ has a locally finite semi-open refinement.  A   $S$-paracompact Hausdorff space $X$ is semi-regular \cite{A}. Hence the properties mentioned in Theorem \ref{XVB} are also equivalent for an extremally disconnected    $S$-paracompact  Hausdorff  spaces.

It is  known   that the Stone-$\breve{C}$ech compactification of a discrete space is extremally disconnected compact Hausdorff space. Thus the class of Stone-$\breve{C}$ech compactification of
discrete spaces   contained in the class of  extremally disconnected S-paracompact Hausdorff spaces and it is turns out to be  subclass of extremally disconnected semi-regular spaces.

\begin{theorem}\label{AD}
	For a space $X,$ the following statements are equivalent:
	\begin{enumerate}
		\item $X$ has  $\theta$-Hurewicz property;
		\item $X$ satisfies $U_{fin}(\theta$-$\Omega, \theta$-$\mathcal{O})$
	\end{enumerate}
\end{theorem}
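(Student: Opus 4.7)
The plan is to handle each implication separately, with the non-trivial content concentrated in $(2)\Rightarrow(1)$.

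For $(1)\Rightarrow(2)$: this is the easy direction. Every $\theta$-open $\omega$-cover is, a fortiori, a $\theta$-open cover of $X$. So given a sequence $(\mathcal{A}_k)_{k\in\mathbb{N}}$ of $\theta$-open $\omega$-covers, apply the $\theta$-Hurewicz property directly to obtain finite subfamilies $\mathcal{B}_k\subseteq\mathcal{A}_k$ such that every $x\in X$ lies in $\bigcup\mathcal{B}_k$ for all but finitely many $k$. Hence $\{\bigcup\mathcal{B}_k : k\in\mathbb{N}\}$ is in particular a $\theta$-open cover of $X$ (indeed a $\theta$-$\gamma$-cover), witnessing $U_{fin}(\theta\text{-}\Omega,\theta\text{-}\mathcal{O})$.

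For $(2)\Rightarrow(1)$: the key device is the standard closure-under-finite-unions construction. Given a sequence $(\mathcal{A}_k)_{k\in\mathbb{N}}$ of $\theta$-open covers, set
\[
\mathcal{A}_k^{*} \;=\; \bigl\{A_{i_1}\cup\cdots\cup A_{i_m} : m\geq 1,\; A_{i_j}\in\mathcal{A}_k\bigr\}.
\]
Because $\tau_\theta$ is a topology, each element of $\mathcal{A}_k^{*}$ is $\theta$-open. Moreover, $\mathcal{A}_k^{*}$ is an $\omega$-cover: for any finite $F\subseteq X$, pick for each $x\in F$ some $A_x\in\mathcal{A}_k$ with $x\in A_x$, and note $F\subseteq\bigcup_{x\in F}A_x\in\mathcal{A}_k^{*}$. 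Then feed the sequence $(\mathcal{A}_k^{*})_{k\in\mathbb{N}}$ into hypothesis $(2)$ to get finite $\mathcal{C}_k\subseteq\mathcal{A}_k^{*}$ whose unions form a $\theta$-open cover. Since every element of $\mathcal{C}_k$ is itself a finite union of members of $\mathcal{A}_k$, the union $\bigcup\mathcal{C}_k$ coincides with $\bigcup\mathcal{B}_k$ for some finite $\mathcal{B}_k\subseteq\mathcal{A}_k$; this yields the required sequence of finite selections from the original $\mathcal{A}_k$.

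The subtle step — and the one I expect to be the main obstacle — is upgrading the property "$\{\bigcup\mathcal{B}_k\}$ is a cover of $X$" to the $\gamma$-cover condition "every $x\in X$ lies in $\bigcup\mathcal{B}_k$ for all but finitely many $k$", which is what the $\theta$-Hurewicz definition actually requires. The natural route is a diagonalization: partition $\mathbb{N}=\bigsqcup_{n\in\mathbb{N}}I_n$ into infinitely many pairwise disjoint infinite pieces, and run the preceding construction separately on each reindexed subsequence $(\mathcal{A}_k^{*})_{k\in I_n}$, so that for every $n$ the resulting selections cover $X$. One then has to argue carefully that, by suitably coordinating the choices across the pieces $I_n$ (or by applying $(2)$ to shifted/tail sequences and concatenating), each $x$ is captured cofinitely often rather than merely infinitely often. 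Verifying that this combinatorial repackaging actually delivers the full $\gamma$-cover conclusion — as opposed to only the weaker Menger-type conclusion that the naive finite-union trick gives — is the step I would dwell on most, since everything else in the argument is formal.
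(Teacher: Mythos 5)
Your direction $(1)\Rightarrow(2)$ is correct and coincides with the paper's: a $\theta$-$\omega$-cover is in particular a $\theta$-open cover, so the $\theta$-Hurewicz selection applies and its output is (even better than) a $\theta$-open cover.

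The direction $(2)\Rightarrow(1)$ contains a genuine gap, and it is exactly the step you yourself flag as the one to dwell on. The hypothesis $U_{fin}(\theta$-$\Omega, \theta$-$\mathcal{O})$, applied to \emph{any} sequence of $\theta$-$\omega$-covers --- your families $\mathcal{A}_k^{*}$, the paper's families of finite unions taken across a partition piece $Y_k$, or any reindexed tail --- returns finite selections whose unions merely \emph{cover} $X$: each point is caught for at least one $k$. Partitioning $\mathbb{N}$ into infinitely many pieces and rerunning the argument on each piece still only guarantees ``at least once'' per run, so after decomposing the finite unions back into members of the original covers you obtain the Menger-type conclusion $U_{fin}(\theta$-$\mathcal{O}, \theta$-$\mathcal{O})$, not the cofinite condition defining $\theta$-Hurewicz, which is $U_{fin}(\theta$-$\mathcal{O}, \theta$-$\Gamma)$. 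No combinatorial repackaging can close this: on a regular (e.g.\ metrizable) space every open set is $\theta$-open, so there the asserted equivalence reads ``$U_{fin}(\Omega,\mathcal{O})$ iff Hurewicz'', i.e.\ ``Menger iff Hurewicz'', which is false for suitable sets of reals. You should also know that the paper's own proof does not close the gap either: immediately after invoking $U_{fin}(\theta$-$\Omega, \theta$-$\mathcal{O})$ it asserts that each $x$ lies in $\cup\mathcal{C}_k$ ``for all but finitely many $k$'', which is the conclusion one would get from target class $\theta$-$\Gamma$, not $\theta$-$\mathcal{O}$. So your instinct was right; as stated the implication is unprovable, and the natural repair is to strengthen the second coordinate in $(2)$ to $\theta$-$\Gamma$ (or to weaken $(1)$ to a $\theta$-Menger property).
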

\begin{proof}
	1 $\Rightarrow$ 2. It  follows from the fact that each $\theta$-$\gamma$-cover of $X$ is a $\theta$-open cover of $X$.
	
	2 $\Rightarrow$ 1. Let  $(\mathcal{A}_k : k \in \mathbb{N})$  be a sequence of $\theta$-open covers of $X$. Let $\mathbb{N} = Y_1 \cup Y_2 \cup...\cup Y_m \cup ... $ be a partition of $\mathbb{N}$ into countably many pairwise
	disjoint infinite subsets. For each $k$,  let $\mathcal{B}_k$ contains all sets   of the form $A_{k_1} \cup A_{k_2} \cup...\cup A_{k_n}$, $k_1 \leq...\leq k_n$, $k_i \in Y_k$, $A_{k_i} \in \mathcal{A}_k$, $i\leq n$, $n\in \mathbb{N}$. Then for each $k,$ $\mathcal{B}_k$ is a $\theta$-$\omega$-cover of $X$. Applying $U_{fin}(\theta$-$\Omega, \theta$-$\mathcal{O})$ on the sequence $(\mathcal{B}_k : k \in \mathbb{N})$, there is a sequence $(\mathcal{C}_k : k \in \mathbb{N})$, where  for each $k$,
	$\mathcal{C}_k$  is a finite subset of $ \mathcal{B}_k$ such that  $ x\in X$ $x\in \cup \mathcal{C}_k$ for all but finitely many $k.$  Assume that $\mathcal{C}_k = \{C^1_k, .....C^{m_k}_k\}$, then by the above construction,  $C^i_k = A^{k_{i_1}}_k\cup.....\cup A^{k_{i_n}}_k,$ $C_k^i \in \mathcal{C}_k$. Thus for each $k,$ we have  a finite subset $\mathcal{A}'_k$ of $\mathcal{A}_k$ such that $\cup\mathcal{C}_k\subseteq \cup\mathcal{A}'_k.$ Hence  $X$ has the $\theta$-Hurewicz property.
\end{proof}
On the similar lines, we can prove that a space	$X$ has the $\alpha$-Hurewicz property if and only if
$X$ satisfies the selection principle $U_{fin}(\alpha$-$\Omega, \alpha$-$\mathcal{O})$

\begin{theorem}\label{BC} If each finite power of space $X$ is $\theta$-Hurewicz, then $X$ satisfy 	$U_{fin}(\theta$-$\Omega, \theta$-$\Omega)$.
\end{theorem}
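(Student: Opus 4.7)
The plan is to use the standard trick (familiar from Just--Miller--Scheepers--Szeptycki type arguments) of lifting a sequence of $\theta$-$\omega$-covers of $X$ to a sequence of $\theta$-open covers of finite powers $X^n$, applying the $\theta$-Hurewicz property on each power, and pasting the resulting finite selections back together to get a $\theta$-$\omega$-cover of $X$.

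In detail, let $(\mathcal{U}_k : k \in \mathbb{N})$ be a sequence of $\theta$-$\omega$-covers of $X$. Partition $\mathbb{N}$ into infinitely many pairwise disjoint infinite sets $N_1, N_2, \ldots$. For each $n$ and each $k \in N_n$, form the family $\mathcal{V}_k^{(n)} = \{U^n : U \in \mathcal{U}_k\}$ on $X^n$. I will first verify two preparatory facts: (a) if $U$ is $\theta$-open in $X$, then $U^n$ is $\theta$-open in $X^n$ — for $(x_1,\dots,x_n)\in U^n$, pick open $B_i\subseteq X$ with $x_i\in B_i\subseteq Cl(B_i)\subseteq U$, and then $B_1\times\cdots\times B_n$ is open in $X^n$ with closure $Cl(B_1)\times\cdots\times Cl(B_n)\subseteq U^n$; and (b) because $\mathcal{U}_k$ is a $\theta$-$\omega$-cover of $X$, every tuple $(x_1,\dots,x_n)\in X^n$ has its finite set $\{x_1,\dots,x_n\}$ contained in some $U\in \mathcal{U}_k$, hence $(x_1,\dots,x_n)\in U^n$. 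Thus $\mathcal{V}_k^{(n)}$ is a $\theta$-open cover of $X^n$.

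Now apply the $\theta$-Hurewicz property of $X^n$ to the subsequence $(\mathcal{V}_k^{(n)} : k \in N_n)$ to pick, for each $k\in N_n$, a finite subset $\mathcal{V}'_k\subseteq \mathcal{V}_k^{(n)}$ such that every point of $X^n$ lies in $\bigcup\mathcal{V}'_k$ for all but finitely many $k\in N_n$. Let $\mathcal{U}'_k=\{U\in\mathcal{U}_k:U^n\in\mathcal{V}'_k\}$, a finite subset of $\mathcal{U}_k$. Doing this uniformly across all $n$ defines $\mathcal{U}'_k$ for every $k\in\mathbb{N}$.

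It remains to check that $\{\bigcup\mathcal{U}'_k:k\in\mathbb{N}\}\in\theta\text{-}\Omega$. Each $\bigcup\mathcal{U}'_k$ is $\theta$-open because $\tau_\theta$ is a topology, so finite unions of $\theta$-open sets are $\theta$-open. For the $\omega$-property, fix any finite $F=\{x_1,\dots,x_m\}\subseteq X$ and consider the point $y=(x_1,\dots,x_m)\in X^m$. By construction, $y\in\bigcup\mathcal{V}'_k$ for all but finitely many $k\in N_m$; picking any such $k$, there is $U\in\mathcal{U}'_k$ with $y\in U^m$, i.e.\ $F\subseteq U\subseteq\bigcup\mathcal{U}'_k$. (If some $\bigcup\mathcal{U}'_k$ happens to equal $X$, it can be discarded or split without harm.) The main obstacle is purely bookkeeping — ensuring that (a) and (b) above hold so the lift to $X^n$ really is a $\theta$-open cover and the selections on different powers can be combined using the partition of $\mathbb{N}$; once these are in place, the verification is routine.
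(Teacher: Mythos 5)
Your argument is correct, and its core is the same as the paper's: lift each $\theta$-$\omega$-cover of $X$ to a $\theta$-open cover of a finite power via $U\mapsto U^{n}$, apply the $\theta$-Hurewicz property of that power, and pull the finite selections back down to $X$. Where you differ is in the bookkeeping that combines the infinitely many powers, and your version is the more careful one. The paper fixes $l$, applies the $\theta$-Hurewicz property of $X^{l}$ to the \emph{entire} sequence $(\{A^{l}:A\in\mathcal{A}_k\}:k\in\mathbb{N})$, and thus obtains for every $l$ a separate sequence of finite selections; it then writes down a single sequence $(\mathcal{A}'_k)$ whose definition still refers to a particular $l$, without saying how the $l$-dependent choices are merged into one finite $\mathcal{A}'_k\subseteq\mathcal{A}_k$ that works for all finite subsets of $X$ at once. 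As written that is a gap (it can be repaired, e.g.\ by setting $\mathcal{A}'_k$ to be the union over $l\le k$ of the $l$-th stage selections). You avoid the issue entirely by partitioning $\mathbb{N}$ into infinite blocks $N_1,N_2,\dots$ and using the indices in $N_n$ only for $X^{n}$, so each $k$ receives exactly one finite selection, and the $\omega$-property of $\{\bigcup\mathcal{U}'_k:k\in\mathbb{N}\}$ is read off from the block $N_m$ when the given finite set has $m$ elements. You also verify explicitly the two facts the paper uses silently, namely that $U^{n}$ is $\theta$-open in $X^{n}$ (closures of boxes are products of closures) and that $\{U^{n}:U\in\mathcal{U}_k\}$ covers $X^{n}$ precisely because $\mathcal{U}_k$ is an $\omega$-cover. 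In short: same strategy, but your decomposition turns the paper's sketch into a complete proof at no extra cost.
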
 
\begin{proof}
	Let $(\mathcal{A}_k: k \in \mathbb{N})$ be a sequence of open $\theta$-$\omega$-covers of $X$. For each $ l \in \mathbb{N}$, we put $\mathcal{B}_k =
	\{A^l: A \in \mathcal{A}_k\}$. For each $l \in \mathbb{N},$ applying the $\theta$-Hurewicz property  to the sequence $(\mathcal{B}_k: k\in \mathbb{N})$ of
	$\theta$-open covers of $X^l$,  for each $k \in \mathbb{N}$ we have finite subfamilies $\mathcal{C}_{k}$ of $\mathcal{B}_k $ such that  $x\in X^l$, $ x\in \cup C_k $ for all but finitely many $k$. For  $k \in \mathbb{N}$, let $\mathcal{A}'_k = \{A\in \mathcal{A}_k : A^l\in \mathcal{C}_k\}$. Then the sequence
	$(\mathcal{A}'_k : k\in \mathbb{N}) $ witnesses that $X$ satisfies  $U_{fin}(\theta$-$\Omega, \theta$-$\Omega)$.
\end{proof}

In a Similar way, we can prove that  if each finite power of space $X$ is $\alpha$-Hurewicz, then $X$ satisfy 	$U_{fin}(\alpha$-$\Omega, \alpha$-$\Omega)$.


\begin{lemma} \cite{MNA1}\label{ZA}
	Let $X$ be an extremally disconnected   space.   Then for  each $\theta$-$\omega$-cover  $\mathcal{A}$ of $X^k$, $k\in\mathbb{N}$, there exists  a $\theta$-$\omega$-cover $\mathcal{B}$ of $X$ such that the  $\theta$-open cover $\{B^k: B \in \mathcal{B}\}$ 
	of $X^k$ refines $\mathcal{A}$. 
\end{lemma}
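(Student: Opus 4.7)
The plan is to manufacture, for each finite $F \subseteq X$, a single clopen set $B_F$ that contains $F$ and whose $k$-th power is trapped inside some $A \in \mathcal{A}$; then the family $\mathcal{B} = \{B_F : F \text{ a finite subset of } X\}$ will automatically be a $\theta$-$\omega$-cover of $X$ with $\{B^k : B \in \mathcal{B}\}$ refining $\mathcal{A}$.

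First I would fix a finite $F \subseteq X$. Since $F^k$ is a finite subset of $X^k$ and $\mathcal{A}$ is a $\theta$-$\omega$-cover of $X^k$, pick $A_F \in \mathcal{A}$ with $F^k \subseteq A_F$. The $\theta$-openness of $A_F$ then supplies, for each $y = (y_1, \ldots, y_k) \in F^k$, a basic open box $W_y = U_{y,1} \times \cdots \times U_{y,k}$ with $y \in W_y$ and $Cl(W_y) = \prod_{i=1}^{k} Cl(U_{y,i}) \subseteq A_F$. Because $X$ is extremally disconnected, each $Cl(U_{y,i})$ is clopen in $X$. For each $x \in F$ I would now define $V_x = \bigcap \{Cl(U_{y,i}) : y \in F^k,\ 1 \le i \le k,\ y_i = x\}$, a finite intersection of clopen sets containing $x$, and set $B_F = \bigcup_{x \in F} V_x$, which is a finite union of clopen sets and so itself clopen; in particular $B_F$ is $\theta$-open and $F \subseteq B_F$.

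The crux is to verify $B_F^{k} \subseteq A_F$. Given $(z_1, \ldots, z_k) \in B_F^{k}$, choose $x_i \in F$ with $z_i \in V_{x_i}$ and form the single tuple $y = (x_1, \ldots, x_k) \in F^k$; by the definition of $V_{x_i}$, applied with this particular $y$ and index $i$ (since $y_i = x_i$), one gets $z_i \in Cl(U_{y,i})$ for every $i$, so $(z_1, \ldots, z_k) \in \prod_{i=1}^{k} Cl(U_{y,i}) = Cl(W_y) \subseteq A_F$. The family $\mathcal{B}$ is a $\theta$-$\omega$-cover of $X$ because any finite $F \subseteq X$ is already swallowed by the corresponding $B_F$, and by construction each $B_F^{k}$ sits inside some member of $\mathcal{A}$, giving the desired refinement.

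The main delicacy I expect is precisely the definition of $V_x$: one must force it to be simultaneously compatible with \emph{every} tuple in $F^k$ that uses $x$ as a coordinate, rather than a single one, which is why the intersection is taken over all pairs $(y,i)$ with $y_i = x$. Extremal disconnectedness is exactly what permits this: it makes each closure $Cl(U_{y,i})$ clopen, so the finite intersections $V_x$ and the finite union $B_F$ stay clopen and $B_F$ is genuinely $\theta$-open; without this hypothesis, passing to closures to land inside $A_F$ would destroy openness and the whole construction would collapse.
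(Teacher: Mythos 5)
Your argument is correct. Note that the paper itself gives no proof of this lemma --- it is quoted from the reference \cite{MNA1}, an unpublished companion paper --- so there is nothing internal to compare against; your construction (choosing $A_F\supseteq F^k$, extracting basic boxes with $Cl(W_y)=\prod_i Cl(U_{y,i})\subseteq A_F$, and intersecting over \emph{all} pairs $(y,i)$ with $y_i=x$ to form the clopen sets $V_x$ and $B_F=\bigcup_{x\in F}V_x$) is the standard one for such ``powers of $\omega$-covers'' lemmas, and the verification that $B_F^k\subseteq A_F$ via the tuple $y=(x_1,\dots,x_k)$ is exactly right, with extremal disconnectedness used precisely where it is needed to keep $B_F$ clopen and hence $\theta$-open. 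The only cosmetic point worth adding is that $B_F\neq X$ (so that $\mathcal{B}$ is genuinely a $\theta$-$\omega$-cover under the usual convention excluding the whole space) follows from $B_F^k\subseteq A_F\subsetneq X^k$ whenever $X^k\notin\mathcal{A}$.
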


\begin{theorem}\label{AG}
	Let $X$ be an extremally disconnected space. If $X$ has a property 
	$U_{fin}(\theta$-$\Omega, \theta$-$\Omega)$, then for each $n\in \mathbb{N}$, $X^n$ also has this property. 
\end{theorem}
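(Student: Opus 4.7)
The plan is to start with an arbitrary sequence $(\mathcal{A}_k : k\in\mathbb{N})$ of $\theta$-$\omega$-covers of $X^n$ and produce finite subfamilies $\mathcal{A}'_k\subseteq \mathcal{A}_k$ so that $\{\bigcup \mathcal{A}'_k : k\in\mathbb{N}\}$ is a $\theta$-$\omega$-cover of $X^n$. The overall strategy is to push the problem down from $X^n$ to $X$ by means of Lemma \ref{ZA}, exploit the assumed $U_{fin}(\theta$-$\Omega,\theta$-$\Omega)$ on $X$, and then lift the selected subfamilies back to $X^n$ through the refinement.

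First I would apply Lemma \ref{ZA}, which invokes the extremal disconnectedness of $X$, to each $\mathcal{A}_k$; this gives a $\theta$-$\omega$-cover $\mathcal{B}_k$ of $X$ whose powers $\{B^n : B\in \mathcal{B}_k\}$ form a refinement of $\mathcal{A}_k$ in $X^n$. Feeding the resulting sequence $(\mathcal{B}_k : k\in\mathbb{N})$ into the hypothesis $U_{fin}(\theta$-$\Omega,\theta$-$\Omega)$ on $X$, I would extract finite subfamilies $\mathcal{B}'_k\subseteq \mathcal{B}_k$ such that $\{\bigcup \mathcal{B}'_k : k\in\mathbb{N}\}$ is a $\theta$-$\omega$-cover of $X$. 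For each $B\in \mathcal{B}'_k$ the refinement lets me fix an $A_B\in \mathcal{A}_k$ with $B^n\subseteq A_B$, and I would set $\mathcal{A}'_k=\{A_B : B\in \mathcal{B}'_k\}$, a finite subset of $\mathcal{A}_k$.

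The concluding step is to verify that $\{\bigcup \mathcal{A}'_k : k\in\mathbb{N}\}$ is a $\theta$-$\omega$-cover of $X^n$. For a finite $F\subseteq X^n$ I would take the finite set $F^*\subseteq X$ of all coordinate entries of points of $F$, locate a $k$ with $F^*\subseteq \bigcup \mathcal{B}'_k$ via the $\omega$-cover property on $X$, and use the inclusions $B^n\subseteq A_B$ to transport this to a containment $F\subseteq \bigcup \mathcal{A}'_k$.

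The main obstacle I anticipate is precisely this final transport. A point $y=(y^1,\dots,y^n)\in F$ is captured by some $A_B\in \mathcal{A}'_k$ only when all of its coordinates $y^1,\dots,y^n$ lie in a single common $B\in \mathcal{B}'_k$, whereas the bare conclusion $F^*\subseteq \bigcup \mathcal{B}'_k$ only guarantees that each coordinate lies in some (a priori different) element of $\mathcal{B}'_k$. Bridging this combinatorial gap is the crux of the argument. A natural remedy is to replace $\mathcal{B}_k$ at the outset by its closure under finite unions, which is still a $\theta$-$\omega$-cover of $X$ since finite unions of $\theta$-open sets remain $\theta$-open, and to arrange the selection so that the single set $\bigcup \mathcal{B}'_k$ itself plays the role of a common refining element, at the cost of re-verifying that the refinement clause of Lemma \ref{ZA} survives such enlargements.
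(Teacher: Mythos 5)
Your overall route is exactly the paper's: apply Lemma \ref{ZA} (using extremal disconnectedness) to each $\mathcal{A}_k$ to obtain a $\theta$-$\omega$-cover $\mathcal{B}_k$ of $X$ with $\{B^n : B\in\mathcal{B}_k\}$ refining $\mathcal{A}_k$, select finite $\mathcal{C}_k\subseteq\mathcal{B}_k$ by the hypothesis $U_{fin}(\theta$-$\Omega, \theta$-$\Omega)$ on $X$, choose $A_C\in\mathcal{A}_k$ with $C^n\subseteq A_C$, and set $\mathcal{A}'_k=\{A_C : C\in\mathcal{C}_k\}$. Up to that point you match the paper step for step.

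The obstacle you flag at the end is, however, a genuine gap, and it is one the paper's own proof passes over in silence: the paper simply asserts that $(\mathcal{A}'_k : k\in\mathbb{N})$ ``witnesses'' the property without verifying that $\{\bigcup\mathcal{A}'_k : k\in\mathbb{N}\}$ is a $\theta$-$\omega$-cover of $X^n$. As you observe, for a finite $F\subseteq X^n$ with coordinate set $F^*$ one only obtains a $k$ with $F^*\subseteq\bigcup\mathcal{C}_k$, i.e. $F\subseteq\left(\bigcup\mathcal{C}_k\right)^n$, whereas what is needed is $F\subseteq\bigcup\{C^n : C\in\mathcal{C}_k\}$; these differ because $\left(\bigcup\mathcal{C}_k\right)^n$ contains mixed products $C_1\times\cdots\times C_n$ with distinct $C_i\in\mathcal{C}_k$, and such points need not lie in any single $C^n$, hence in no $A_C$. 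Your proposed repair --- closing $\mathcal{B}_k$ under finite unions before selecting --- does not close the gap: if $B=B_1\cup\cdots\cup B_m$, then $B^n$ again contains those mixed products, so there is no reason for $B^n$ to be contained in a single member of $\mathcal{A}_k$, and the refinement clause of Lemma \ref{ZA} is destroyed exactly as you fear. (This is precisely why the analogous product argument runs smoothly for $S_{fin}$-type principles, where the selected sets themselves form an $\omega$-cover and a single selected $C$ absorbs all of $F^*$, but stalls for $U_{fin}$, where only the unions $\bigcup\mathcal{C}_k$ are controlled.) So you have correctly located the crux of the argument, but neither your patch nor the paper supplies the missing step; as written, the final verification remains unproved.
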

\begin{proof}
	Let $(\mathcal{A}_k: k \in \mathbb{N})$ be a sequence of $\theta$-$\omega$-covers of $X^n$. Then by Lemma \ref{ZA},  there exists a $\theta$-$\omega$-cover $\mathcal{B}_k$ of $X$ such that $\{B^n: B \in \mathcal{B}_k\}$ refines $\mathcal{A}_k$.  Apply the condition $U_{fin}(\theta$-$\Omega, \theta$-$\Omega)$ of $X$ on the sequence $(\mathcal{B}_k : k\in \mathbb{N})$,  then for each $k\in\mathbb{N}$, there exists a finite subset $\mathcal{C}_k$ of $\mathcal{B}_k$ such that  $\{\cup\mathcal{C}_k : k\in \mathbb{N}\}$
	forms  a $\theta$-$\omega$-cover of $X$.  Since  $\{B^n: B \in \mathcal{B}_k\}$ refines $\mathcal{A}_k$,  for each $C\in\mathcal{C}_k,$ we have   $A_C\in\mathcal{A}_k$ such that  $C^n\subset A_C.$ For $k\in \mathbb{N},$ let  $\mathcal{A}'_k = \{ A_C\in \mathcal{A}_k : C\in \mathcal{C}_k\}$. Thus the sequence $(\mathcal{A}'_k : k\in \mathbb{N})$ witnesses that  $X^n$ has a property $U_{fin}(\theta$-$\Omega, \theta$-$\Omega)$. 
\end{proof}
Thus from Theorem \ref{AD},  Theorem \ref{BC} and  Theorem \ref{AG},  we obtained the following corollary. 
\begin{corollary}\label{BHO}
	Let $X$ be an extremally disconnected space. Then every finite power of  $X$ is $\theta$-Hurewicz if and only if $X$  satisfies $U_{fin}(\theta$-$\Omega, \theta$-$\Omega).$
\end{corollary}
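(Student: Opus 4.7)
The plan is to derive the corollary as a direct composition of Theorems~\ref{AD}, \ref{BC}, and \ref{AG}; essentially no new argument is needed beyond bookkeeping, since the three earlier results are tailored to snap together at exactly this point.

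For the forward direction, suppose every finite power $X^n$ is $\theta$-Hurewicz. This is precisely the hypothesis of Theorem~\ref{BC}, whose conclusion is that $X$ satisfies $U_{fin}(\theta$-$\Omega,\theta$-$\Omega)$. Observe that extremal disconnectedness is not used in this half of the argument.

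For the converse, assume $X$ is extremally disconnected and satisfies $U_{fin}(\theta$-$\Omega,\theta$-$\Omega)$. First I would apply Theorem~\ref{AG} to lift the selection principle to every finite power, so that $X^n$ satisfies $U_{fin}(\theta$-$\Omega,\theta$-$\Omega)$ for each $n\in\mathbb{N}$. Next I would record the trivial inclusion $\theta$-$\Omega\subseteq\theta$-$\mathcal{O}$—every $\theta$-$\omega$-cover is in particular a $\theta$-open cover—so any witness of $U_{fin}(\theta$-$\Omega,\theta$-$\Omega)$ on $X^n$ automatically witnesses $U_{fin}(\theta$-$\Omega,\theta$-$\mathcal{O})$ on $X^n$. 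Finally, Theorem~\ref{AD} characterizes the $\theta$-Hurewicz property on an arbitrary space as exactly $U_{fin}(\theta$-$\Omega,\theta$-$\mathcal{O})$, so applying it to $X^n$ yields that each $X^n$ is $\theta$-Hurewicz.

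There is no genuine obstacle: the only point that merits a moment of care is applying Theorem~\ref{AD} to the power $X^n$ rather than to $X$ itself, which is legitimate because that theorem assumes no special topological hypothesis. Accordingly, extremal disconnectedness enters the proof only through the use of Theorem~\ref{AG} in the $(\Leftarrow)$ direction.
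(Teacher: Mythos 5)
Your proof is correct and takes essentially the same route as the paper, which derives the corollary by citing Theorems \ref{AD}, \ref{BC} and \ref{AG} in precisely the combination you describe. The one detail you make explicit---that $U_{fin}(\theta$-$\Omega, \theta$-$\Omega)$ implies $U_{fin}(\theta$-$\Omega, \theta$-$\mathcal{O})$, so that Theorem \ref{AD} can then be applied to each power $X^n$---is exactly the bookkeeping the paper leaves implicit, and your observation that extremal disconnectedness is used only via Theorem \ref{AG} is accurate.
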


\section{Preservation in subspaces and mappings }
In this section, we analyse  the properties of $\alpha$-Hurewicz and $\theta$-Hurewicz spaces.  We investigate the behaviour of these properties under subspaces and various type of mappings.  In the following  example we show that $\alpha$-Hurewicz is not a hereditarty property.
\begin{example}\label{MC}  Let   $x_0$ be a fixed point of an uncountable set $X.$  Then the family $\tau= \{A\subset X : x_0\notin A\}\cup \{A\subset X : X\setminus A$ is finite$\}$ of subsets of $X$ forms a topology on $X.$  It is easy to prove that  the space $(X, \tau)$ is $\alpha$-Hurewicz.  Consider the subspace $Y = X\setminus \{x_0\}$ of $(X, \tau)$. Then one point set $\{x\}, x\in Y$ is $\alpha$-open in $Y$. Then the  $\alpha$-open cover $\mathcal{A} = \{\{x\} : x\in Y\}$ of $Y$ has no countable subcover. Hence the subspace $Y$ of  the space $(X, \tau)$ is not $\alpha$-Hurewicz. 
\end{example} Note that,  $Y$  is also  an open ($\alpha$-open) subset of $(X, \tau)	$. Hence the   open ($\alpha$-open) subspace of  a $\alpha$-Hurewicz space need not be   $\alpha$-Hurewicz.

\textbf{Remark:}	Let $X$ be the   space  considered in  Example \ref{MC}  It is also easy to prove that $X$ is  $\theta$-Hurewicz space. On the other hand  the open subspace $Y = X\setminus\{x_0\}$  of $X$ is not $\theta$-Hurewicz. It means that $\theta$-Hurewicz property is  also  not  hereditary.

However the $\alpha$-Hurewicz $\&$ $\theta$-Hurewicz properties are preserved under clopen subsets as shown below in Proposition \ref{IL}. 
\begin{proposition}\label{IL}
	A clopen subspace of a $\alpha$-Hurewicz ($\theta$-Hurewicz) space is $\alpha$-Hurewicz ($\theta$-Hurewicz).
	\begin{proof}
		Let $Y$ be a clopen subspace of a $\alpha$-Hurewicz space $X$. Let $(\mathcal{A}_k :{k \in\mathbb{N}})$ be a sequence of $\alpha$-open covers of $Y$. Then $(\mathcal{B}_k :{k \in\mathbb{N}})$ is a sequence of $\alpha$-open covers of $X$, where   $\mathcal{B}_k= \mathcal{A}_k\cup\{X\setminus Y\}$ for each $k$.  Since $X$ is $\alpha$-Hurewicz, there is a sequence $(\mathcal{B}'_k : {k \in\mathbb{N}})$, where $\mathcal{B}'_k$ is a finite subset of $\mathcal{B}_k$ such that $x\in X$, $x\in\bigcup\mathcal{B}'_k$ for all but finitely many $k.$ We observe that for each $y\in Y$, $y\in\bigcup\mathcal{B}'_k\setminus\{X\setminus Y\}$. That means that $Y$ is an $\alpha$-Hurewicz space. Similarly, we can prove for $\theta$-Hurewicz space.
	\end{proof}
\end{proposition}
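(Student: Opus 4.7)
The plan is to reduce the property for $Y$ to the assumed property of $X$ by inflating each given cover of $Y$ to a cover of the whole space. Given a sequence $(\mathcal{A}_k : k\in\mathbb{N})$ of $\alpha$-open covers of the clopen subspace $Y$, I would form $\mathcal{B}_k = \mathcal{A}_k\cup\{X\setminus Y\}$ for each $k$. Since $X\setminus Y$ is clopen in $X$, it is $\alpha$-open in $X$, so the newly added set causes no trouble; the subtler point is that every member of $\mathcal{A}_k$, which is $\alpha$-open in the subspace $Y$, is still $\alpha$-open in $X$. Clopenness of $Y$ is precisely what is needed to make this transfer work.

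To verify the lifting, I would use that for a clopen $Y\subseteq X$ and any $A\subseteq Y$ one has $Int_Y(A)=Int_X(A)$ (because $Y$ is open in $X$) and $Cl_Y(A)=Cl_X(A)$ (because $Y$ is closed in $X$). Combining these, $A\subseteq Int_Y(Cl_Y(Int_Y(A)))$ in $Y$ implies $A\subseteq Int_X(Cl_X(Int_X(A)))$ in $X$, so $A$ remains $\alpha$-open in $X$. Consequently each $\mathcal{B}_k$ really is an $\alpha$-open cover of $X$.

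Having set this up, I would apply the $\alpha$-Hurewicz property of $X$ to the sequence $(\mathcal{B}_k:k\in\mathbb{N})$ to pick finite subfamilies $\mathcal{B}'_k\subseteq\mathcal{B}_k$ such that each $x\in X$ lies in $\bigcup\mathcal{B}'_k$ for all but finitely many $k$. Put $\mathcal{A}'_k = \mathcal{B}'_k\setminus\{X\setminus Y\}$, which is a finite subset of $\mathcal{A}_k$. For $y\in Y$, since $y\notin X\setminus Y$, membership of $y$ in $\bigcup\mathcal{B}'_k$ forces membership in $\bigcup\mathcal{A}'_k$, so $y\in\bigcup\mathcal{A}'_k$ for all but finitely many $k$. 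This witnesses the $\alpha$-Hurewicz property of $Y$. The $\theta$-Hurewicz case runs identically, with the lifting step now reading: if $x\in B\subseteq Cl_Y(B)\subseteq A$ for some $B$ open in $Y$, then $B$ is open in $X$ and $Cl_Y(B)=Cl_X(B)$, so the same nesting certifies $A$ as $\theta$-open in $X$.

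The main obstacle is the lifting step: ensuring that \emph{$\alpha$-open (respectively $\theta$-open) in $Y$} implies \emph{$\alpha$-open (respectively $\theta$-open) in $X$}. Without clopenness, neither the interior-closure nesting in the $\alpha$-definition nor the closure-inside-a-neighbourhood condition in the $\theta$-definition need transfer to the ambient space, and Example~\ref{MC} exhibits exactly this pathology for merely open subspaces; so the hypothesis on $Y$ is used essentially. Once the lifting is in hand, the remaining steps are bookkeeping about finite subfamilies and discarding the auxiliary set $X\setminus Y$.
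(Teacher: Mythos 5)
Your proposal is correct and follows essentially the same route as the paper's proof: inflate each cover of $Y$ by adjoining $X\setminus Y$, apply the hypothesis on $X$, and discard the auxiliary set. The only difference is that you spell out the lifting step (that $\alpha$-open, resp.\ $\theta$-open, in the clopen subspace $Y$ remains so in $X$), which the paper leaves implicit; this is a welcome addition, not a divergence.
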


\begin{proposition}\label{PPP}
	Let $Y$ be a subspace of  a space $X$. If  $Y$ is $\theta$-Hurewicz, then for each sequence $(\mathcal{A}_k : k\in \mathbb{N})$ 	of covers of $Y$ by $\theta$-open sets of $X$, there is a sequence $(\mathcal{B}_k : k\in \mathbb{N})$, where for each $k$, $\mathcal{B}_k$ is a finite subset of $\mathcal{A}_k$ such that for each $y\in Y$, $y\in \cup\mathcal{B}_k$ for all except finitely many $k$.
	
\end{proposition}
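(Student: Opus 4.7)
The plan is to convert the given cover of $Y$ by $\theta$-open sets of the ambient space $X$ into a $\theta$-open cover of the subspace $Y$ (in its own topology), apply the $\theta$-Hurewicz property of $Y$ directly, and then lift the selection back to the original families in $X$.

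First I would establish the key trace observation: if $A$ is $\theta$-open in $X$ and $Y\subseteq X$, then $A\cap Y$ is $\theta$-open in $Y$. Given $x\in A\cap Y$, pick an open set $B$ in $X$ with $x\in B\subseteq Cl_X(B)\subseteq A$. Then $B\cap Y$ is open in $Y$, and since $Cl_Y(B\cap Y)\subseteq Cl_X(B)\cap Y\subseteq A\cap Y$, the set $A\cap Y$ satisfies the definition of $\theta$-openness in $Y$. Consequently, for each $k$ the family $\mathcal{A}_k^{\,\prime}=\{A\cap Y: A\in \mathcal{A}_k\}$ is a $\theta$-open cover of the subspace $Y$.

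Next I would apply the $\theta$-Hurewicz property of $Y$ to the sequence $(\mathcal{A}_k^{\,\prime}: k\in \mathbb{N})$ to obtain finite subfamilies $\mathcal{B}_k^{\,\prime}\subseteq \mathcal{A}_k^{\,\prime}$ such that every $y\in Y$ lies in $\cup \mathcal{B}_k^{\,\prime}$ for all but finitely many $k$. Finally, for each $B^{\prime}\in \mathcal{B}_k^{\,\prime}$ I would choose a witness $A_{B^{\prime}}\in \mathcal{A}_k$ with $B^{\prime}=A_{B^{\prime}}\cap Y$, and set $\mathcal{B}_k=\{A_{B^{\prime}}: B^{\prime}\in \mathcal{B}_k^{\,\prime}\}$. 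Since $\cup\mathcal{B}_k^{\,\prime}\subseteq (\cup\mathcal{B}_k)\cap Y$, the sequence $(\mathcal{B}_k: k\in \mathbb{N})$ has the desired property.

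The only real step requiring attention is the trace lemma in the first paragraph, and even there the main obstacle is just being careful that closure does not behave pathologically under restriction; the inclusion $Cl_Y(B\cap Y)\subseteq Cl_X(B)\cap Y$ holds in any topological space, so no hypotheses beyond what is stated (for instance, no separation or disconnectedness assumption) are needed. The remainder is a standard selection-principles transfer argument.
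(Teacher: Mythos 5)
Your proposal is correct and follows essentially the same route as the paper's proof: trace the covers to $Y$, apply the $\theta$-Hurewicz property of the subspace, and lift the finite selections back to $\mathcal{A}_k$. You additionally spell out the trace lemma (that $A\cap Y$ is $\theta$-open in $Y$ via $Cl_Y(B\cap Y)\subseteq Cl_X(B)\cap Y$), which the paper asserts without proof, and your choice of one witness $A_{B'}$ per trace guarantees finiteness of $\mathcal{B}_k$ slightly more carefully than the paper's definition of $\mathcal{A}'_k$.
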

\begin{proof}
	Let $Y$ be $\theta$-Hurewicz subspace of a space $X$. Let $(\mathcal{A}_k : k\in \mathbb{N})$
	be a sequence of covers of $Y$ by $\theta$-open
	sets of $X$. Put $\mathcal{B}_k = \{Y\cap A  : A\in \mathcal{A}_k\}$. Then $(\mathcal{B}_k : k\in \mathbb{N})$ is a sequence of $\theta$-open covers of $Y$ and $Y$ is  $\theta$-Hurewicz, there exists a finite subset $\mathcal{C}_k$ of $\mathcal{B}_k$ such that $y\in Y$, $y \in \cup\mathcal{C}_k$ for all but finitely many $k.$ Let  $\mathcal{A}'_k =  \{A\in \mathcal{A}_k : A\cap Y \in \mathcal{C}_k\}$. Then  the sequence $(\mathcal{A}'_k : k\in \mathbb{N})$ witnesses our requirement.
	
	In the following example we show that the converse of the  above theroem does not hold. 
\end{proof}
\begin{example}
	\indent Let   $U= \{u_{\alpha} : \alpha < \omega_{1}\}$, $V=\{v_i: i\in \omega\}$ and $W=\{\langle u_\alpha, v_i\rangle : \alpha < \omega_1, i\in\omega\}$.  Let $X=W\cup U\cup\{x'\}$, $x\not \in W\cup U.$   Topologize $X$ as follows:  for $u_\alpha\in U,$ $\alpha<\omega_1$ the  basic neighborhood takes of the form $A_{u_\alpha}(i) = \{u_\alpha\}\cup\{\langle u_\alpha, v_j\rangle : j\geq i,  i\in \omega\} $, the basic neighborhood of $x'$ takes of the form $A_{x'}(\alpha)= \{x'\}\cup\bigcup\{\langle u_\beta, v_i\rangle: \beta > \alpha, i\in \omega\},$ $\alpha < \omega_1$ and each point of $W$ is isolated.
	
	Consider the subspace  $Y = \{u_\alpha : \alpha< \omega_1\}\cup\{x'\}$ of the space $X$. Observe  that, the singleton set  $\{y\},$ $y\in Y,$  is $\theta$-open in $Y.$  Thus the family  $\{\{y\}: y\in Y\}$ is an uncountable $\theta$-open cover of $Y$, which has  no countable subcover. Hence  $Y$ is not $\theta$-Hurewicz.
	
	Next, we show that $Y$ for each sequence $(\mathcal{A}_k : k\in \mathbb{N})$ 	of covers of $Y$ by $\theta$-open sets of $X$, there exists a sequence $(\mathcal{B}_k : k\in \mathbb{N})$, where for each $k$, $\mathcal{B}_k$ is a finite subset of $\mathcal{A}_k$ such that for each $y\in Y$, $y\in \cup\mathcal{B}_k$ for all but finitely many $k$.
	
	Let $(\mathcal{A}_k : k\in \mathbb{N})$ be a family of   $\theta$-open sets  of $X$ such that for each $k\in \mathbb{N}$,  $Y\subseteq \cup\mathcal{A}_k.$ Then  for each $k\in \mathbb{N}$, there is an open set $B_k$ and   $A_k\in \mathcal{A}_k$   such that $x'\in B_k\subset \overline{B_k}\subset A_k.$  From the construction of topology on $X$  for each $k$,  there exists a $\beta_k <\omega_1$ such that $A_{x'}(\beta_k)\subseteq B_k$,  $\overline{A_{x'}(\beta_k)}\subseteq \overline{B_k}$. Thus for each $k,$
	$\{u_\alpha: \alpha>\beta_k\}\cup \{x'\}\subseteq \overline{B_k}\subset A_k$ and   $Y'_k= \bigcup_{\alpha\leq\beta_k}u_\alpha$ is countable.  Thus $ \left( \bigcup_{k\in \mathbb{N}} Y'_k \right)\cap Y$ is countable.  As similiar to Theorem \ref{BKM}, we can find a  subset $\mathcal{A}'_k$ of $\mathcal{A}_k$ such that for each $y\in \left( \bigcup_{k\in \mathbb{N}} Y'_k \right)\cap Y$, $y\in \cup\mathcal{A}'_k$ for all but finitely many $k$. For each $k\in \mathbb{N},$ let $\mathcal{A}''_k = \mathcal{A}'_k \cup \{A_k\}$. Then  $\mathcal{A}''_k$ is a finite subset of $\mathcal{A}_k$  and for each  $y\in Y$, $y\in\cup\mathcal{A}''_k $ for all but finitely many $k$.  
\end{example}
The mapping $f : X\rightarrow Y$  from a space $X$ to a space $Y$  is said to be :  

1.  $\alpha$-continuous \cite{P} ($\alpha$-irresolute \cite{O}) if the preimage of each open ($\alpha$-open) set of $Y$ is $\alpha$-open in $X.$

2.  $\alpha$-open (strongly $\alpha$-open) if the image of each $\alpha$-open set of $X$ is $\alpha$-open (open) in $Y$.

3.	  $\theta$-continuous (\cite{Q}, \cite{R}) (resp., strongly $\theta$-continuous \cite{N})
if for each $x \in X $ and each open set $B$ of $Y$ containing $f(x)$ there exists an 
open set $A$ of  $X$ containing $x$ such that $f(Cl(A)) \subset Cl(B)$ (resp., $f(Cl(A)) \subset
B)$.

\begin{theorem}
	An $\alpha$-continuous image of an  $\alpha$-Hurewicz space is  Hurewicz.
	\begin{proof} Let $X$  be an $\alpha$-Hurewicz space and $f : X \rightarrow Y$ be an $\alpha$-continuous map from $X$  onto a space $Y$. Let $(\mathcal{A}_k :{k \in\mathbb{N}})$ be a sequence of open covers of $Y$. Since $f$ is $\alpha$-continuous, then for each $k\in \mathbb{N}$ $ \{f^{-1}({A}_k) : A_k\in\mathcal{A}_k\}$, is a $\alpha$-open cover of $X$. Since $X$ is  $\alpha$-Hurewicz,  there is a sequence $(\mathcal{B}_k : {k \in\mathbb{N}})$ where for each $k,$ $\mathcal{B}_k$ is a finite subset of $\mathcal{A}_k$ such that $x\in X,$ $x\in\bigcup \{ f^{-1}(B) : B\in \mathcal{B}_k)\}$ for all but finitely many $k$. Consider   ${A_{B_k}} = f({B}_k)$, $k \in \mathbb{N}$. Then the sequence $(f(B) : B\in \mathcal{B}_k$ $\& $ $ {k\in\mathbb{N}})$  witness that  $Y$ is   Hurewicz. 
	\end{proof}
\end{theorem}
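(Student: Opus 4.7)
The plan is a standard push--pull argument through the map $f:X\to Y$. Since $Y$ is assumed to be the image of $X$ under $f$, I will take $f$ to be surjective. Starting from any sequence $(\mathcal{A}_k : k\in\mathbb{N})$ of open covers of $Y$, I will pull back to $\alpha$-open covers of $X$, apply the $\alpha$-Hurewicz hypothesis, and then push the resulting finite subfamilies forward to a Hurewicz witness for $Y$.

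For each $k$, set $\mathcal{U}_k = \{f^{-1}(A) : A\in \mathcal{A}_k\}$. By $\alpha$-continuity of $f$, each member of $\mathcal{U}_k$ is $\alpha$-open in $X$; and surjectivity forces $\mathcal{U}_k$ to cover $X$, since for $x\in X$ one picks $A\in \mathcal{A}_k$ with $f(x)\in A$ and then $x\in f^{-1}(A)\in \mathcal{U}_k$. Applying the $\alpha$-Hurewicz property to the sequence $(\mathcal{U}_k : k\in \mathbb{N})$ produces finite subfamilies $\mathcal{V}_k\subseteq \mathcal{U}_k$ such that every $x\in X$ lies in $\bigcup \mathcal{V}_k$ for all but finitely many $k$.

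To conclude, for each $V\in \mathcal{V}_k$ choose some $A_V\in \mathcal{A}_k$ with $f^{-1}(A_V)=V$, and set $\mathcal{A}'_k = \{A_V : V\in \mathcal{V}_k\}$, a finite subset of $\mathcal{A}_k$. Given any $y\in Y$, take $x\in f^{-1}(y)$; for all but finitely many $k$ there is $V\in \mathcal{V}_k$ with $x\in V=f^{-1}(A_V)$, whence $y=f(x)\in A_V\subseteq \bigcup \mathcal{A}'_k$. So $(\mathcal{A}'_k : k\in \mathbb{N})$ witnesses that $Y$ is Hurewicz. I do not expect any real obstacle here: the argument is the routine transport of a selection principle along an $\alpha$-continuous surjection, and the only minor point is choosing a representative $A_V\in \mathcal{A}_k$ for each $V\in \mathcal{V}_k$, which is a finite selection for each $k$.
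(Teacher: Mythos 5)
Your proposal is correct and follows essentially the same push--pull argument as the paper: pull the open covers of $Y$ back to $\alpha$-open covers of $X$, apply the $\alpha$-Hurewicz property, and push the chosen finite subfamilies forward. Your write-up is in fact cleaner at the final step, where you explicitly select a representative $A_V\in\mathcal{A}_k$ for each preimage $V$, a point the paper glosses over with its $f(B)$ notation.
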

Similarly, we can prove the following theorem.
\begin{theorem} A $\alpha$-irresolute image of an  $\alpha$-Hurewicz space is   $\alpha$-Hurewicz.
	
\end{theorem}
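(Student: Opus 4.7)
The plan is to imitate the proof of the preceding theorem verbatim, with the single substitution that open covers of $Y$ are replaced by $\alpha$-open covers of $Y$, and the invocation of $\alpha$-continuity is replaced by the invocation of $\alpha$-irresoluteness. Concretely, I would start by fixing a surjective $\alpha$-irresolute map $f : X \to Y$ where $X$ is $\alpha$-Hurewicz, and let $(\mathcal{A}_k : k \in \mathbb{N})$ be an arbitrary sequence of $\alpha$-open covers of $Y$.

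The key step is the pullback. Since $f$ is $\alpha$-irresolute, for every $A \in \mathcal{A}_k$ the set $f^{-1}(A)$ is $\alpha$-open in $X$. Together with surjectivity, this means that for each $k$ the family $\mathcal{A}^*_k = \{f^{-1}(A) : A \in \mathcal{A}_k\}$ is an $\alpha$-open cover of $X$. Then apply the $\alpha$-Hurewicz property of $X$ to the sequence $(\mathcal{A}^*_k : k \in \mathbb{N})$ to obtain, for each $k$, a finite subfamily $\mathcal{B}'_k \subseteq \mathcal{A}_k$ such that $\{f^{-1}(B) : B \in \mathcal{B}'_k\}$ covers every $x \in X$ for all but finitely many $k$.

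Finally, push forward: given $y \in Y$, pick any $x \in f^{-1}(y)$ (possible by surjectivity); then for all but finitely many $k$ there exists $B \in \mathcal{B}'_k$ with $x \in f^{-1}(B)$, hence $y = f(x) \in B \subseteq \bigcup \mathcal{B}'_k$. Thus $(\mathcal{B}'_k : k \in \mathbb{N})$ witnesses that $Y$ is $\alpha$-Hurewicz.

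There is essentially no obstacle beyond noticing why the strengthening from $\alpha$-continuous to $\alpha$-irresolute is exactly what is needed here: the starting covers on $Y$ are $\alpha$-open rather than open, so preservation of $\alpha$-openness under preimage (which is the defining property of $\alpha$-irresolute maps, but not of $\alpha$-continuous ones) is indispensable to produce an $\alpha$-open cover of $X$ on which the hypothesis of $X$ being $\alpha$-Hurewicz can be applied. Everything else is a direct transcription of the preceding argument.
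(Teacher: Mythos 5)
Your proposal is correct and is exactly what the paper intends: the paper gives no separate argument for this theorem, merely remarking that it follows ``similarly'' from the proof of the preceding result on $\alpha$-continuous images, and your transcription (pulling back the $\alpha$-open covers of $Y$ via $\alpha$-irresoluteness to get $\alpha$-open covers of $X$, applying the $\alpha$-Hurewicz property, and pushing the finite subfamilies forward using surjectivity) is precisely that adaptation. Your closing observation that $\alpha$-irresoluteness, rather than mere $\alpha$-continuity, is what makes the pullback step work is the right one.
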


Since each continuous map is $\alpha$-continuous, we have the following corollary:
\begin{corollary}
	A continuous image of an $\alpha$-Hurewicz space is Hurewicz.
\end{corollary}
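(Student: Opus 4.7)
The plan is to derive the corollary as an immediate consequence of the preceding theorem by verifying that continuity implies $\alpha$-continuity. Concretely, suppose $f \colon X \to Y$ is continuous and surjective, and let $U$ be any open subset of $Y$. By continuity, $f^{-1}(U)$ is open in $X$. The implication chain in the preliminaries, namely "open $\Rightarrow \alpha$-open", then yields that $f^{-1}(U)$ is $\alpha$-open in $X$. This is exactly the defining condition for $\alpha$-continuity of $f$ given in item 1 of the definition of $\alpha$-continuous maps, so $f$ is $\alpha$-continuous.

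With this observation in hand, I would simply invoke the preceding theorem ("An $\alpha$-continuous image of an $\alpha$-Hurewicz space is Hurewicz") applied to $f$: since $X$ is $\alpha$-Hurewicz and $f$ is an $\alpha$-continuous surjection onto $Y$, the conclusion is that $Y$ is Hurewicz, which is precisely what the corollary asserts.

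No real obstacle is expected; the proof is a one-line reduction. The only item worth stating explicitly is the "open $\Rightarrow \alpha$-open" step, which is already recorded in the preliminaries and requires no further justification. Thus the total write-up should occupy only two or three sentences.
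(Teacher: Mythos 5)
Your proposal is correct and matches the paper exactly: the paper derives this corollary with the single remark that every continuous map is $\alpha$-continuous (since preimages of open sets are open, hence $\alpha$-open) and then applies the preceding theorem on $\alpha$-continuous images of $\alpha$-Hurewicz spaces. Nothing further is needed.
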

\begin{theorem} 
	A strongly $\theta$-continuous image of a  $\theta$-Hurewicz space $X$ is  Hurewicz.
	\begin{proof} Let  $X$ be a  $\theta$-Hurewicz space and $f: X\rightarrow Y$ be a strongly $\theta$-continuous map from $X$  onto  a space $Y.$ Consider the sequence $(\mathcal{A}_k : {k \in\mathbb{N}})$  of open covers of $Y$.  Then for each $k,$ and for each $x\in X,$ $f(x) \in A_k$ for some $A_k\in \mathcal{A}_k.$  From the  strongly $\theta$-continuity of $f$, there is an open set $B_{x,k}$ containing $x$ such that $f(Cl({B_{x,k}}))\subset A_k.$  This means that $f^{-1}(A_k)$ is   $\theta$-open. Then for each $k\in \mathbb{N}$, $\{f^{-1} (A) : A \in \mathcal{A}_k\}$  is a  $\theta$-open cover of $X.$   As $X$ is  $\theta$-Hurewicz, there is a sequence $(\mathcal{C}_k : {k \in\mathbb{N}})$  where for each $k,$ $\mathcal{C}_k$ is a finite subset of $\mathcal{A}_k$  such that   $x\in X,$ $x\in \bigcup \{f^{-1}(C) : C\in \mathcal{C}_k\}$ for all but finitely many $k$. Then we have \begin{center}
			$Y = f(X) = f(\bigcup_{C\in\mathcal{C}_k}f^{-1}(C) )$
			= $\bigcup\mathcal{C}_k.$\end{center}
		Hence, $Y$ is Hurewicz.
	\end{proof}
\end{theorem}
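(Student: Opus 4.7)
The plan is to follow the standard pull-back/push-forward template for selection principles: take a sequence of open covers of $Y$, pull them back through $f$ to obtain $\theta$-open covers of $X$, invoke the $\theta$-Hurewicz property of $X$, then project the resulting finite selections back to $Y$.

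First I would isolate the key consequence of strong $\theta$-continuity, namely that $f^{-1}(B)$ is $\theta$-open in $X$ for every open $B \subseteq Y$. Given $x \in f^{-1}(B)$, the open set $B$ contains $f(x)$, so by hypothesis there is an open set $A \subseteq X$ with $x \in A$ and $f(Cl(A)) \subseteq B$; equivalently, $x \in A \subseteq Cl(A) \subseteq f^{-1}(B)$, which is exactly the condition in the definition of $\theta$-open. This is the only real computation in the argument.

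Next, let $(\mathcal{A}_k : k \in \mathbb{N})$ be a sequence of open covers of $Y$. Setting $\mathcal{A}'_k = \{f^{-1}(A) : A \in \mathcal{A}_k\}$ gives a sequence of $\theta$-open covers of $X$, since $f$ is onto and each $\mathcal{A}_k$ covers $Y$. Applying the $\theta$-Hurewicz property of $X$ yields, for each $k$, a finite subfamily $\mathcal{F}_k \subseteq \mathcal{A}_k$ such that $\{f^{-1}(A) : A \in \mathcal{F}_k\}$ has union containing every $x \in X$ for all but finitely many $k$. Given any $y \in Y$, fix $x \in f^{-1}(y)$; for all but finitely many $k$ we have $x \in f^{-1}(A)$ for some $A \in \mathcal{F}_k$, hence $y = f(x) \in \bigcup \mathcal{F}_k$. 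Thus the sequence $(\mathcal{F}_k : k \in \mathbb{N})$ witnesses that $Y$ is Hurewicz.

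The main obstacle, such as it is, lies entirely in the first step: recognizing that strong $\theta$-continuity is tailor-made to force preimages of open sets to be $\theta$-open, which is the precise property that allows the $\theta$-Hurewicz selection principle to be invoked on the pulled-back covers. Once that is in hand, the rest is a routine forward/backward chase along $f$.
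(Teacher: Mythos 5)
Your proposal is correct and follows essentially the same route as the paper: pull the open covers of $Y$ back through $f$, observe that strong $\theta$-continuity makes each $f^{-1}(A)$ a $\theta$-open set (since $f(Cl(B))\subset A$ gives $x\in B\subset Cl(B)\subset f^{-1}(A)$), apply the $\theta$-Hurewicz property of $X$, and push the finite selections forward using surjectivity. If anything, your final step is stated more carefully than the paper's, which compresses the ``all but finitely many $k$'' bookkeeping into a single displayed equality.
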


\begin{theorem} 
	A $\theta$-continuous image of a  $\theta$-Hurewicz space $X$ is   $\theta$-Hurewicz.
	\begin{proof} Let $f: X\rightarrow Y$ be a $\theta$-continuous map from a  $\theta$-Hurewicz space $X$ onto  a space $Y.$ Let $(\mathcal{A}_k : {k \in\mathbb{N}})$ be a sequence of $\theta$-open covers of $Y$.  Then for each $k\in \mathbb{N}$, $\{f^{-1} (A) : A\in \mathcal{A}_k\}$ is a $\theta$-open cover of $X$, because $f$ is    $\theta$-continuous.  Since  $X$ is  $\theta$-Hurewicz, there is a sequence $(\mathcal{B}_k : {k \in\mathbb{N}}),$  where for each $k,$ $\mathcal{B}_k$ is a finite subset of $\mathcal{A}_k$ such that $x\in X,$ $x\in \bigcup_{B\in \mathcal{B}_k}f^{-1}(B)$ for all but finitely many $k$. For each $k,$ and for each $B_k\in\mathcal{B}_k$, we may choose $A_k \in \mathcal{A}_k$ such that $B_k = f^{-1} (A_k )$. Then we have \begin{center}
			$Y = f(X) = f(\bigcup_{B\in\mathcal{B}_k}f^{-1}(B) )$
			= $\bigcup\mathcal{B}_k.$\end{center}
		Hence, $Y$ is $\theta$-Hurewicz.
	\end{proof}
\end{theorem}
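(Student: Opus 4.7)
The plan is to transfer the $\theta$-Hurewicz property from $X$ to $Y$ by pulling covers back through $f$, applying the hypothesis on $X$, and pushing the resulting finite subfamilies forward via surjectivity. The only nontrivial ingredient is verifying that preimages of $\theta$-open sets under a $\theta$-continuous map are $\theta$-open, so I would isolate that as the first step.

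\textbf{Key lemma (preimage step).} I would first show that if $f: X \to Y$ is $\theta$-continuous and $U \subset Y$ is $\theta$-open, then $f^{-1}(U)$ is $\theta$-open in $X$. Given $x \in f^{-1}(U)$, since $f(x) \in U$ and $U$ is $\theta$-open, there is an open set $V \subset Y$ with $f(x) \in V \subset Cl(V) \subset U$. By $\theta$-continuity of $f$ at $x$ applied to the open set $V$, there exists an open set $W \subset X$ with $x \in W$ and $f(Cl(W)) \subset Cl(V)$. Hence $Cl(W) \subset f^{-1}(Cl(V)) \subset f^{-1}(U)$, giving $x \in W \subset Cl(W) \subset f^{-1}(U)$, which is exactly the definition of $f^{-1}(U)$ being $\theta$-open.

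\textbf{Main argument.} Let $(\mathcal{A}_k : k \in \mathbb{N})$ be a sequence of $\theta$-open covers of $Y$. By the preimage step, for each $k$ the family $\mathcal{A}_k^{*} = \{f^{-1}(A) : A \in \mathcal{A}_k\}$ is a $\theta$-open cover of $X$. Since $X$ is $\theta$-Hurewicz, there exists a sequence $(\mathcal{B}_k : k \in \mathbb{N})$ with $\mathcal{B}_k \subset \mathcal{A}_k$ finite such that, writing $\mathcal{B}_k^{*} = \{f^{-1}(B) : B \in \mathcal{B}_k\}$, every $x \in X$ lies in $\bigcup \mathcal{B}_k^{*}$ for all but finitely many $k$.

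\textbf{Pushing forward.} Finally, I would use surjectivity of $f$ to transfer this to $Y$. Given any $y \in Y$, pick $x \in f^{-1}(y)$; there exists $N$ such that for all $k \geq N$, $x \in f^{-1}(B)$ for some $B \in \mathcal{B}_k$, whence $y = f(x) \in B \subset \bigcup \mathcal{B}_k$. Thus $y \in \bigcup \mathcal{B}_k$ for all but finitely many $k$, which witnesses that $Y$ is $\theta$-Hurewicz.

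The only real obstacle is the preimage lemma, which is not completely automatic because the condition $f(Cl(A)) \subset Cl(B)$ in the definition of $\theta$-continuity involves closures on both sides; the right nesting $V \subset Cl(V) \subset U$ coming from $\theta$-openness of $U$ is what makes the argument line up cleanly. Once that is in hand, the remainder is a standard pull-back/push-forward bookkeeping argument and is essentially parallel to the proof for strongly $\theta$-continuous maps given above.
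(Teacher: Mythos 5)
Your proof is correct and follows essentially the same route as the paper: pull the covers back, apply the $\theta$-Hurewicz property of $X$, and push the finite subfamilies forward using surjectivity. The only difference is that you explicitly prove the preimage lemma (that $f^{-1}$ of a $\theta$-open set is $\theta$-open under $\theta$-continuity), which the paper merely asserts, and your final push-forward step tracks the ``all but finitely many $k$'' condition more carefully than the paper's concluding display.
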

Since  continuity implies $\theta$-continuity, we have the following corollary:
\begin{corollary}
	A continuous image of a  $\theta$-Hurewicz space is   $\theta$-Hurewicz.
\end{corollary}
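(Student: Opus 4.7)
The plan is to derive this corollary as an immediate consequence of the preceding theorem, which states that a $\theta$-continuous image of a $\theta$-Hurewicz space is $\theta$-Hurewicz. Since the only substantive content is the implication ``continuity $\Rightarrow$ $\theta$-continuity,'' my proof will verify this implication and then invoke the previous theorem verbatim.

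Concretely, given a continuous surjection $f : X \to Y$ with $X$ being $\theta$-Hurewicz, I would first check that $f$ is $\theta$-continuous. For this, fix $x \in X$ and an open set $B$ in $Y$ containing $f(x)$. Set $A = f^{-1}(B)$; this is open in $X$ by continuity and contains $x$. Since $f^{-1}(Cl(B))$ is closed and contains $A$, we get $Cl(A) \subseteq f^{-1}(Cl(B))$, whence $f(Cl(A)) \subseteq Cl(B)$. This is exactly the definition of $\theta$-continuity given in the paper. Once this observation is in place, the preceding theorem applies directly and yields that $Y$ is $\theta$-Hurewicz, completing the argument.

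There is no real obstacle here: the corollary is essentially a one-line deduction, and the only place a reader might pause is the passage from continuity to $\theta$-continuity, which is a standard closure-preimage manipulation. I would therefore keep the write-up compact, stating the continuity-implies-$\theta$-continuity step explicitly for clarity and then citing the previous theorem to conclude.
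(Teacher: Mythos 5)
Your proposal matches the paper's argument exactly: the paper derives this corollary from the preceding theorem via the remark that continuity implies $\theta$-continuity, and your explicit verification of that implication (via $Cl(f^{-1}(B)) \subseteq f^{-1}(Cl(B))$) is the correct and standard justification. No issues.
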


\begin{theorem}
	For a space $(X, \tau),$ the following statements are equivalent:
	\item (1) $(X, \tau)$ is $\alpha$-Hurewicz;
	\item (2) $(X, \tau)$ admits a strongly $\alpha$-open bijection onto a Hurewicz space $(Y, \tau')$.
	\begin{proof} 
		(1) $\Rightarrow$ (2): Let  $(X, \tau)$ be an $\alpha$-Hurewicz space, then $(X, \tau_\alpha)$ is Hurewicz. The identity map $I_X:(X, \tau)\rightarrow (X, \tau_\alpha)$ is a strongly $\alpha$-open bijection.\\
		\indent (2)$\Rightarrow (1):$ Assume that $f: (X, \tau)\rightarrow (Y,\tau')$ is  a strongly $\alpha$-open bijection from a space $(X,\tau)$ onto a Hurewicz space $(Y, \tau')$. Let $(\mathcal{A}_k :{k \in\mathbb{N}})$ be a sequence of $\alpha$-open covers of $(X, \tau).$ Then  for each $k\in \mathbb{N}$, $\{f({A}_k) : A_k\in \mathcal{A}_k\}$ is an open cover of $Y$. Since $Y$ is Hurewicz space, there exists a sequence $(\mathcal{B}_k : {k\in\mathbb{N}})$,  where for each $k$,  $\mathcal{B}_k$ is  a finite subset of $\mathcal{A}_k$ such that for each $y\in Y$,  $y\in \bigcup\{f(B) : B\in\mathcal{B}_k\}$ for all but finitely many $k.$ Hence for each  $x\in X$, $x\in\bigcup\mathcal{B}_k$ for all but finitely many $k.$ 
	\end{proof}	
\end{theorem}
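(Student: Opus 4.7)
The plan is to exploit the remark made at the start of Section 2 that $(X, \tau)$ is $\alpha$-Hurewicz if and only if $(X, \tau_\alpha)$ is Hurewicz, where $\tau_\alpha$ is the topology on $X$ consisting of all $\alpha$-open subsets of $(X, \tau)$. This observation gives a canonical candidate target space for direction $(1) \Rightarrow (2)$, while the reverse direction reduces to pushing and pulling covers across the bijection.

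For $(1) \Rightarrow (2)$, suppose $(X, \tau)$ is $\alpha$-Hurewicz. Then $(X, \tau_\alpha)$ is Hurewicz by the remark above. Consider the identity map $I_X : (X, \tau) \to (X, \tau_\alpha)$. It is a bijection, and since the open sets of $\tau_\alpha$ are by definition precisely the $\alpha$-open sets of $(X, \tau)$, the image under $I_X$ of any $\alpha$-open set of $(X, \tau)$ is open in $(X, \tau_\alpha)$. Thus $I_X$ is a strongly $\alpha$-open bijection onto the Hurewicz space $(X, \tau_\alpha)$.

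For $(2) \Rightarrow (1)$, let $f : (X, \tau) \to (Y, \tau')$ be a strongly $\alpha$-open bijection onto a Hurewicz space. Given a sequence $(\mathcal{A}_k : k \in \mathbb{N})$ of $\alpha$-open covers of $(X, \tau)$, form for each $k$ the family $\mathcal{A}'_k = \{f(A) : A \in \mathcal{A}_k\}$. Strong $\alpha$-openness forces each member of $\mathcal{A}'_k$ to be open in $Y$, and surjectivity of $f$ guarantees that $\mathcal{A}'_k$ covers $Y$. Applying the Hurewicz property of $(Y, \tau')$ to the sequence $(\mathcal{A}'_k : k \in \mathbb{N})$ produces finite subfamilies $\mathcal{C}_k \subseteq \mathcal{A}'_k$ such that every $y \in Y$ lies in $\bigcup \mathcal{C}_k$ for all but finitely many $k$. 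Since $f$ is a bijection, pulling back yields finite subfamilies $\mathcal{B}_k = \{A \in \mathcal{A}_k : f(A) \in \mathcal{C}_k\} \subseteq \mathcal{A}_k$, and for each $x \in X$ the membership $f(x) \in \bigcup \mathcal{C}_k$ for cofinitely many $k$ translates into $x \in \bigcup \mathcal{B}_k$ for cofinitely many $k$, which is the $\alpha$-Hurewicz property.

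There is no substantial obstacle here; both directions reduce to definition chasing once the translation $\alpha$-Hurewicz $(X, \tau) \Leftrightarrow$ Hurewicz $(X, \tau_\alpha)$ is in hand. The only point requiring mild care is verifying in $(1) \Rightarrow (2)$ that $I_X$ is \emph{strongly} $\alpha$-open (not merely $\alpha$-open), which is immediate from the definition of $\tau_\alpha$ as the collection of $\alpha$-open sets of $\tau$.
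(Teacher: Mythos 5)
Your proposal is correct and follows essentially the same route as the paper: direction $(1)\Rightarrow(2)$ uses the identity map onto $(X,\tau_\alpha)$, and direction $(2)\Rightarrow(1)$ pushes the $\alpha$-open covers forward through $f$, applies the Hurewicz property of $Y$, and pulls the finite subfamilies back via bijectivity. Your write-up is in fact slightly more explicit than the paper's about why injectivity lets the cofinite membership transfer back to $X$.
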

\section{Characterizations of Variants of Hurewicz spaces}
Let  $\omega^\omega$ be  the set of all functions $f: \omega\rightarrow \omega$. The set $\omega^\omega$ is equipped with the product topology.
Define  a relation $\leq^{* }$ on $\omega^\omega$ as follows:  $f\leq^{*} g$ if $f(n) \leq g(n) $ for all but finitely many $n$.  Then the relation  $\leq^{*}$ on $\omega^\omega $ is  reflexive and transitive. Let $H$ be a subset of $\omega^\omega$. We say $H$ is a bounded  if $H$ has an upper bound with respect to $\leq^{*},$ otherwise $H$ is unbounded. We say $H$ is dominating if it is cofinal in $(\omega^\omega, \leq^*).$ Let $  \mathfrak{b} $ be the smallest cardinality of an unbounded subset of $\omega^\omega$ with respect  to $\leq^*.$ The cardinal $\mathfrak{b}$ is known as the bounding number.
It is not difficult to prove that $\omega_1 \leq \mathfrak{b} \leq \mathfrak{d} \leq \mathfrak{c}$ and it is known that $\omega_1 < \mathfrak{b} = \mathfrak{c},$ 
$ \omega_1 < \mathfrak{d} = \mathfrak{c}$ and $\omega_1 \leq \mathfrak{b} < \mathfrak{d} = \mathfrak{c}$ are all consistent with the axioms of ZFC  for more  details (see \cite{W}).

\begin{theorem}\label{CD1}
	Let $X\subset \omega^{\omega}$. If $X$ is  a mildly Hurewicz space, then $X$ is  bounded.
\end{theorem}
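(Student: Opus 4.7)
The plan is to construct, for each coordinate $n\in\omega$, a clopen cover of $\omega^{\omega}$ by natural ``bounded-at-$n$'' sets, apply the mildly Hurewicz hypothesis, and then read off a function that dominates every element of $X$ in the $\leq^{*}$ order. This mirrors the classical argument that a Hurewicz subspace of $\omega^\omega$ is bounded, but we must arrange to use only clopen covers since that is all the mildly Hurewicz property gives us.

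First I would recall that $\omega^{\omega}$ carries the product topology where $\omega$ is discrete, so each projection $\pi_n : \omega^{\omega}\to\omega$ is continuous and every singleton $\{k\}\subset\omega$ is clopen. For each $n,m\in\omega$ set
\[
U_{n,m} = \{f\in\omega^{\omega} : f(n)\leq m\} = \pi_n^{-1}(\{0,1,\dots,m\}).
\]
Each $U_{n,m}$ is a finite union of clopen sets, hence clopen in $\omega^{\omega}$. The family $\mathcal{A}_n = \{U_{n,m} : m\in\omega\}$ is increasing in $m$ and has union $\omega^{\omega}$, so it is a clopen cover of $\omega^{\omega}$, and in particular a clopen cover of $X$.

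Next I would apply the mildly Hurewicz property of $X$ to the sequence $(\mathcal{A}_n : n\in\omega)$ to get, for each $n$, a finite subfamily $\mathcal{B}_n\subset\mathcal{A}_n$ such that every $x\in X$ satisfies $x\in\bigcup\mathcal{B}_n$ for all but finitely many $n$. Define $g:\omega\to\omega$ by
\[
g(n) = \max\{m : U_{n,m}\in\mathcal{B}_n\},
\]
using the convention $g(n)=0$ if $\mathcal{B}_n=\emptyset$. Because the chain $\{U_{n,m}\}_{m\in\omega}$ is increasing, $\bigcup\mathcal{B}_n = U_{n,g(n)}$. Therefore, for each $x\in X$, the condition $x\in U_{n,g(n)}$ holds for all but finitely many $n$, which translates to $x(n)\leq g(n)$ for all but finitely many $n$, i.e. $x\leq^{*}g$. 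Hence $g$ is an upper bound of $X$ with respect to $\leq^{*}$, so $X$ is bounded.

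The construction is short and essentially forced once one restricts to clopen covers; the main obstacle, and the reason the argument is not completely automatic from the analogous result for Hurewicz, is to ensure that the witnessing covers are clopen rather than merely open. That is handled by exploiting the discreteness of the factor $\omega$: the coordinate-bounding sets $U_{n,m}$ are preimages of finite subsets of a discrete space, hence clopen, which is exactly what lets the mildly Hurewicz hypothesis be applied.
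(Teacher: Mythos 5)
Your proof is correct, and it takes a recognizably different (and cleaner) route than the paper's. The paper argues by contraposition: assuming $X$ unbounded, it builds for each $n$ the cover $\mathcal{A}_n=\{A_n^{f_x}: f_x\in X\}$ with $A_n^{f_x}=\{h\in X: h(i)\in\{f_x(1),\dots,f_x(n)\},\ 1\le i\le n\}$, verifies by hand that each such set is clopen in $X$, and then shows that no sequence of finite subfamilies can work because an unbounded witness $f'$ escapes $\bigcup\mathcal{B}_n$ infinitely often. You instead argue directly with the increasing chains $U_{n,m}=\pi_n^{-1}(\{0,\dots,m\})$, whose clopenness is immediate from discreteness of the factor, and exploit the chain structure so that $\bigcup\mathcal{B}_n$ is just its largest member $U_{n,g(n)}$; the dominating function $g$ then falls out with no contradiction argument. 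Your version buys a shorter verification of clopenness and a positive construction of the bound; the paper's version is essentially the same combinatorial content packaged contrapositively, with covers indexed by points of $X$ rather than by $\omega$. Two cosmetic points you should tidy: the mildly Hurewicz hypothesis is about clopen covers of $X$ itself, so you should pass to the traces $U_{n,m}\cap X$ (clopen in the subspace, so nothing is lost), and since distinct indices $m$ can then yield the same trace, define $g(n)$ by choosing one index $m_B$ for each $B\in\mathcal{B}_n$ and taking the maximum of those finitely many choices rather than of the possibly infinite set $\{m: U_{n,m}\cap X\in\mathcal{B}_n\}$. Neither point affects the validity of the argument.
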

\begin{proof}
	Let us  assume that  $X$ be an unbounded subset of $\omega^\omega.$    For $f_x \in X$ and $n\in \omega$, let $A_n^{f_x} = \{h\in X : h(i) \in \{f_x(1), f_x(2),...f_x(n)\}, 1\leq i\leq n \}$. Then $A_n ^{f_x}$ is a basic open set of $X$, containing $f_x$.  Moreover, if $g\not \in A_n^{f_x} $, then there exists an $i\in\omega$ such that $1\leq i\leq n$ and $g(i)\not\in \{f_x(1), f_x(2),...f_x(n)\}.$ Then we have an open set   $B_n^g = \{h\in X : h(i)\in \omega\setminus \{f_x(1), f_x(2),...f_x(n)\}\}$ of  $X$ containing $g$ such that $B_n^g\cap A_n^{f_x} = \emptyset$. This implies that $g\not\in Cl_X(A_n^{f_x})$. Hence $Cl_X(A_n^{f_x})\subseteq A_n^{f_x}$ which implies that  $A_n^{f_x}$ is closed.  For each $n\in \omega$, put  $\mathcal{A}_n = \{A_n ^{f_x} : f_x\in X\}.$ Then $\mathcal{A}_n$ is a clopen cover of $X$ and  $(\mathcal{A}_n : n\in \omega)$  is a sequence of clopen covers of $X$. For  $n\in \omega$ and for  any finite subset  $\mathcal{B}_n$ of $\mathcal{A}_n$. Let $n_{f_x} = max \{f_x(1), f_x(2), ..... f_x(n) : f_x\in A_n^{f_x}\}$. Define a function $f :\omega \rightarrow \omega$ as follows: \begin{center}
		$f(n) = max \{n_{f_x} :      A_n^{f_x}\in \mathcal{B}_n  \} +1$.
	\end{center} From the assumption of  unboundedness of   $X$,  there exists $f'\in X $ such that $f'\not\leq^{*} f,$ that is $f(n) < f'(n)$ for infinitely many $n$. Hence for infinitely many $n,$ $f'\not\in A_n^{f_x}$,  $A_n^{f_x} \in \mathcal{B}_n.$  Thus $f'\not \in \bigcup \mathcal{B}_n$ for infinitely  many $n$. This means that $X$ does not have mildly Hurewicz property. This completes the proof.
\end{proof}
\begin{corollary}
	The   dominating subset $D$ of $\omega^\omega$ is not mildly Hurewicz.
\end{corollary}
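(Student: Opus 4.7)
The plan is to derive the corollary as an immediate contrapositive application of Theorem \ref{CD1}. Since that theorem says every mildly Hurewicz subspace of $\omega^\omega$ is bounded, it suffices to show that any dominating $D \subseteq \omega^\omega$ is unbounded with respect to $\leq^*$.

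To establish unboundedness, I would argue by contradiction. Suppose $D$ admits an upper bound $h \in \omega^\omega$, i.e., $g \leq^* h$ for every $g \in D$. Consider the function $h' \in \omega^\omega$ defined by $h'(n) = h(n) + 1$. Since $D$ is dominating, there exists $g_0 \in D$ with $h' \leq^* g_0$. Combining with $g_0 \leq^* h$ and the transitivity of $\leq^*$ yields $h' \leq^* h$, i.e., $h(n) + 1 \leq h(n)$ for all but finitely many $n$, which is absurd.

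Therefore $D$ is unbounded, and Theorem \ref{CD1} directly implies that $D$ is not mildly Hurewicz. There is no real obstacle here: the whole content sits in Theorem \ref{CD1}, and the corollary only requires the elementary observation that a dominating family cannot be bounded, which in turn uses only the definitions of $\leq^*$, ``bounded'', and ``dominating''. The only minor care needed is in the bookkeeping of the ``all but finitely many'' quantifier when composing two $\leq^*$ inequalities to derive the contradiction.
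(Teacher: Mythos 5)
Your proposal is correct and matches the paper's intent exactly: the paper states this corollary as an immediate consequence of Theorem \ref{CD1}, with the (unstated) elementary fact that a dominating family cannot be bounded. Your contradiction argument via $h'(n)=h(n)+1$ and transitivity of $\leq^*$ correctly supplies that missing detail.
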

By  Theorem \ref{CD1}, the  following corollaries follows directly:
\begin{corollary}\label{CD2}
	Let  $X$ be a $\theta$-Hurewicz subspace of $\omega^{\omega}$, then $X$ is  bounded.
\end{corollary}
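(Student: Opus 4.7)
The plan is to derive this corollary as an immediate consequence of Theorem \ref{CD1}, exploiting the chain of implications already established in the introduction. Recall the paper records
\begin{center}
semi-Hurewicz $\Rightarrow$ $\alpha$-Hurewicz $\Rightarrow$ Hurewicz $\Rightarrow$ $\theta$-Hurewicz $\Rightarrow$ mildly Hurewicz,
\end{center}
so every $\theta$-Hurewicz space is automatically mildly Hurewicz. Thus if $X \subset \omega^\omega$ is $\theta$-Hurewicz, then $X$ is a mildly Hurewicz subspace of $\omega^\omega$, and Theorem \ref{CD1} applies verbatim to conclude that $X$ is bounded with respect to $\leq^{*}$.

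Concretely, the proof I would write is a single sentence: given the $\theta$-Hurewicz hypothesis on $X$, invoke the implication $\theta$-Hurewicz $\Rightarrow$ mildly Hurewicz, then apply Theorem \ref{CD1}. No sequence of clopen covers needs to be constructed again, and no new set-theoretic machinery is required, since all of the work (building the clopen covers $\mathcal{A}_n = \{A_n^{f_x} : f_x \in X\}$ and producing the witness $f \in \omega^\omega$ from finite subcovers) has already been done in the proof of Theorem \ref{CD1}.

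There is essentially no obstacle here; the only thing to be careful about is making sure the cited implication $\theta$-Hurewicz $\Rightarrow$ mildly Hurewicz is indeed valid at the level of \emph{subspaces} of $\omega^\omega$, which it is, because both definitions are applied intrinsically to the subspace topology and the implication holds for arbitrary topological spaces (every clopen set is $\theta$-open, so a $\theta$-open cover refines to a clopen cover and the finite selections transfer). Hence the corollary follows directly, and I would state it as a two-line proof immediately after the statement.
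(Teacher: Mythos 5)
Your proof is correct and is exactly the route the paper takes: the corollary is listed as a direct consequence of Theorem \ref{CD1} via the implication $\theta$-Hurewicz $\Rightarrow$ mildly Hurewicz (every clopen cover is a $\theta$-open cover). No further comment is needed.
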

\begin{corollary}
	Let $X$ be  a nearly Hurewicz subspace of $\omega^{\omega}$, then $X$ is  bounded.
\end{corollary}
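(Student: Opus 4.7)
The plan is to reduce the corollary to Theorem \ref{CD1} by establishing the implication that every nearly Hurewicz space is mildly Hurewicz. Once this is done, the result follows immediately, exactly as Corollary \ref{CD2} follows from the chain \emph{Hurewicz} $\Rightarrow$ $\theta$-\emph{Hurewicz} $\Rightarrow$ \emph{mildly Hurewicz} already recorded in the introduction.

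To prove nearly Hurewicz $\Rightarrow$ mildly Hurewicz, I would let $(\mathcal{A}_k : k\in\mathbb{N})$ be a sequence of clopen covers of $X$. Since every clopen set is open, each $\mathcal{A}_k$ is in particular an open cover, so the nearly Hurewicz hypothesis produces finite subfamilies $\mathcal{B}_k \subset \mathcal{A}_k$ such that every $x\in X$ lies in $\bigcup\{Int(Cl(B)) : B\in \mathcal{B}_k\}$ for all but finitely many $k$. The key observation is that for any clopen set $B$ one has $Cl(B)=B$ and hence $Int(Cl(B)) = Int(B) = B$. Therefore the condition collapses to $x\in \bigcup \mathcal{B}_k$ for all but finitely many $k$, which is precisely the definition of the mildly Hurewicz property.

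With this implication established, the corollary follows directly: if $X\subseteq\omega^\omega$ is nearly Hurewicz then $X$ is mildly Hurewicz, and hence bounded by Theorem \ref{CD1}. There is essentially no obstacle in the argument; its entire content is the trivial fact that the operator $Int\circ Cl$ fixes clopen sets, which lets one specialize the nearly Hurewicz hypothesis on the clopen covers $\mathcal{A}_n=\{A_n^{f_x}:f_x\in X\}$ built in the proof of Theorem \ref{CD1}. As an alternative presentation, one could repeat that proof verbatim, since the sets $A_n^{f_x}$ are already shown there to be clopen and so the $Int(Cl(\cdot))$ decoration in the definition of nearly Hurewicz has no effect on them.
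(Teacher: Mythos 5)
Your proposal is correct and matches the paper's approach: the paper derives this corollary directly from Theorem \ref{CD1} via the implication that nearly Hurewicz spaces are mildly Hurewicz (obtained there by chaining nearly Hurewicz $\Rightarrow$ almost Hurewicz $\Rightarrow$ $\theta$-Hurewicz $\Rightarrow$ mildly Hurewicz). Your direct one-step verification of nearly Hurewicz $\Rightarrow$ mildly Hurewicz, using the fact that $Int(Cl(B))=B$ for clopen $B$, is a valid and slightly more economical way to establish the same reduction.
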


\begin{corollary}
	Let $X$ be   an almost Hurewicz subspace of $\omega^{\omega}$, then $X$ is  bounded.
\end{corollary}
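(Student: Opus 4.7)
The plan is to reduce this corollary to the already-proved Corollary~\ref{CD2} by invoking one of the implications established earlier in the paper. Specifically, the (unnumbered) theorem preceding Corollary~\ref{AZ} in Section~2 shows that every almost Hurewicz space is $\theta$-Hurewicz. Applying that implication to $X \subseteq \omega^\omega$, we immediately obtain that $X$ is $\theta$-Hurewicz, and Corollary~\ref{CD2} then yields boundedness. In effect the entire content of the statement is the chain
\[
\text{almost Hurewicz} \Rightarrow \theta\text{-Hurewicz} \Rightarrow \text{bounded in } \omega^\omega.
\]

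For a reader who prefers a self-contained argument, one can rerun the proof of Theorem~\ref{CD1} with the almost Hurewicz selection in place of the mildly Hurewicz one. The key observation is already made inside the proof of Theorem~\ref{CD1}: the basic neighbourhoods $A_n^{f_x}$ are computed there to be clopen in $X$, so $Cl_X(A_n^{f_x}) = A_n^{f_x}$. Consequently, when the almost Hurewicz property is applied to the open cover $\mathcal{A}_n = \{A_n^{f_x} : f_x \in X\}$, the conclusion ``$f \in \bigcup\{Cl(B) : B \in \mathcal{B}_n\}$ for all but finitely many $n$'' collapses to ``$f \in \bigcup \mathcal{B}_n$ for all but finitely many $n$''. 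Defining the function $f \in \omega^\omega$ from the finite subfamilies $\mathcal{B}_n$ exactly as in Theorem~\ref{CD1} then produces an $f' \in X$ with $f'(n) > f(n)$ for infinitely many $n$, so $f' \notin \bigcup \mathcal{B}_n$ for infinitely many $n$, contradicting the assumed unboundedness of $X$.

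I do not expect a genuine obstacle: the hard work was done once in Theorem~\ref{CD1}, and the two routes above are simply the abstract and the unpacked form of the same reduction. The only minor care required is to cite the correct intermediate result (either the ``every almost Hurewicz space is $\theta$-Hurewicz'' theorem, or Corollary~\ref{CD2} directly) so that the one-line proof is not mistaken for a circular appeal to Theorem~\ref{CD1} itself.
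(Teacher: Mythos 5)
Your proposal is correct and matches the paper's approach: the paper derives this corollary directly from Theorem \ref{CD1} via the implication chain almost Hurewicz $\Rightarrow$ $\theta$-Hurewicz $\Rightarrow$ mildly Hurewicz, which is exactly the reduction you describe (your citation of Corollary \ref{CD2} is just an intermediate link in the same chain). Your self-contained variant is also sound, since the sets $A_n^{f_x}$ are shown to be clopen in $X$ in the proof of Theorem \ref{CD1}, so the closures appearing in the almost Hurewicz selection contribute nothing new.
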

\begin{theorem}
	Let $X$ be a $\theta$-Hurewicz space. Then every $\theta$-continuous image of $X$ in $\omega^\omega$ is  bounded.
\end{theorem}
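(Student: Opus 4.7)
The plan is to combine two results already established in the paper: the preservation of the $\theta$-Hurewicz property under $\theta$-continuous images, and Corollary \ref{CD2}, which asserts that every $\theta$-Hurewicz subspace of $\omega^\omega$ is bounded.

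Given a $\theta$-continuous map $f: X \to \omega^\omega$, I would set $Y = f(X)$ with the subspace topology inherited from $\omega^\omega$ and view $f$ as a surjection from $X$ onto $Y$. Applying the earlier theorem that a $\theta$-continuous image of a $\theta$-Hurewicz space is $\theta$-Hurewicz, I would conclude that $Y$ is $\theta$-Hurewicz. Since $Y \subseteq \omega^\omega$, Corollary \ref{CD2} then yields that $Y$ is bounded, completing the proof.

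The only real subtlety is the implicit step of corestricting $f$ to a $\theta$-continuous surjection $X \to Y$: one must check that preimages of $\theta$-open subsets of $Y$ (under the subspace topology) remain $\theta$-open in $X$. In our setting this is painless, because $Y$ is embedded in the zero-dimensional space $\omega^\omega$, whose $\theta$-open sets intersected with $Y$ are still generated by basic clopen cylinders; preimages of clopen sets under a $\theta$-continuous map are $\theta$-open, which is the mechanism already exploited in the paper's previous proofs.

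If one prefers to avoid invoking the image theorem entirely, the direct route is to assume for contradiction that $f(X)$ is unbounded, replay verbatim the construction from the proof of Theorem \ref{CD1} on $f(X)$ to produce, for each $n$, a clopen cover $\mathcal{A}_n$ of $f(X)$ whose finite subfamilies cannot witness mild-Hurewicz-ness, pull each $\mathcal{A}_n$ back along $f$ to obtain $\theta$-open covers of $X$, apply the $\theta$-Hurewicz property of $X$ to extract suitable finite subfamilies, and then push these forward along $f$ to contradict the non-coverability property built into the $\mathcal{A}_n$. The hardest part in either route is the same: confirming that the $\theta$-continuous preimage behaves well on clopen sets of $\omega^\omega$, after which everything chains together cleanly.
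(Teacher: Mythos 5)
Your proposal is correct and takes essentially the same route as the paper: the paper's proof is exactly the two-step argument of applying the theorem that a $\theta$-continuous image of a $\theta$-Hurewicz space is $\theta$-Hurewicz and then invoking Corollary \ref{CD2}. Your additional remark about corestricting $f$ to a map onto $f(X)$ addresses a detail the paper silently glosses over, but it does not change the substance of the argument.
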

\begin{proof}  Let $F: X \rightarrow \omega^\omega$ be a $\theta$-continuous map from a $\theta$-Hurewicz space $X$ to $\omega^\omega$.  Then $F(X)$ is a $\theta$-Hurewicz space. Hence  $F(X)$ is  bounded.
\end{proof}
\begin{corollary}
	Every continuous image of a $\theta$-Hurewicz space $X$ in $\omega^\omega$ is  bounded.
	
\end{corollary}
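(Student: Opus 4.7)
The plan is to deduce this directly from the preceding theorem (that every $\theta$-continuous image of a $\theta$-Hurewicz space in $\omega^\omega$ is bounded) by showing that continuity implies $\theta$-continuity. This is the standard observation already implicitly used earlier in the paper (see the corollary following the $\theta$-continuous image theorem), so the corollary is essentially a one-line consequence.

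First I would recall the definition of $\theta$-continuity given in Section 3: $f : X \to Y$ is $\theta$-continuous if for each $x \in X$ and each open set $B$ of $Y$ containing $f(x)$ there is an open set $A$ of $X$ containing $x$ with $f(Cl(A)) \subseteq Cl(B)$. To verify that any continuous $f$ satisfies this, given $x \in X$ and an open neighborhood $B$ of $f(x)$, take $A = f^{-1}(B)$; then $A$ is open, contains $x$, and by continuity of $f$ together with closedness of $Cl(B)$ we get $f(Cl(A)) \subseteq Cl(f(A)) \subseteq Cl(B)$.

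Now let $F : X \to \omega^\omega$ be a continuous map from the $\theta$-Hurewicz space $X$. By the observation above, $F$ is $\theta$-continuous, so the preceding theorem applies to $F$ and yields that $F(X)$ is bounded in $\omega^\omega$.

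There is no real obstacle here; the only thing to be careful about is matching the definition of $\theta$-continuity used in the paper (in terms of closures of preimages of open sets) rather than alternative formulations in the literature, so that the implication ``continuous $\Rightarrow$ $\theta$-continuous'' is transparently justified before invoking the previous theorem.
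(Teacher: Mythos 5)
Your proposal is correct and follows exactly the paper's intended route: the corollary is deduced from the preceding theorem on $\theta$-continuous images together with the standard fact that continuity implies $\theta$-continuity (a fact the paper itself invokes for an analogous corollary earlier in Section 3). Your explicit verification that a continuous map is $\theta$-continuous, via $A=f^{-1}(B)$ and $f(Cl(A))\subseteq Cl(f(A))\subseteq Cl(B)$, is a sound and welcome addition, though the paper leaves it implicit.
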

\begin{corollary}
	Every continuous image of a Hurewicz space  $X$ in $\omega^\omega$ is bounded.
	
\end{corollary}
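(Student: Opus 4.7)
The plan is to deduce the corollary directly from the preceding corollary together with the implication chain recalled in Section~1. The preceding corollary states that every continuous image of a $\theta$-Hurewicz space in $\omega^\omega$ is bounded, and the chain
\begin{center}
Hurewicz $\Rightarrow$ $\theta$-Hurewicz $\Rightarrow$ mildly Hurewicz
\end{center}
makes every Hurewicz space automatically $\theta$-Hurewicz. So, given a continuous map $f : X \to \omega^\omega$ with $X$ Hurewicz, $X$ is $\theta$-Hurewicz, and invoking the preceding corollary immediately yields that $f(X)$ is bounded.

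If one preferred a more self-contained route, I would use the standard observation that a continuous image of a Hurewicz space is Hurewicz: pull back any sequence $(\mathcal{A}_k : k \in \mathbb{N})$ of open covers of $f(X) \subseteq \omega^\omega$ to the sequence $(\{f^{-1}(A) : A \in \mathcal{A}_k\} : k \in \mathbb{N})$ of open covers of $X$, apply the Hurewicz selection on $X$, and push the chosen finite subfamilies forward via $f$ to obtain finite $\mathcal{B}_k \subseteq \mathcal{A}_k$ witnessing the Hurewicz property of $f(X)$. Then $f(X)$ is Hurewicz, hence mildly Hurewicz by the implication chain, hence bounded by Theorem~\ref{CD1}.

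There is no genuine obstacle here; the statement is a formal consequence of Theorem~\ref{CD1} together with the implication Hurewicz~$\Rightarrow$~mildly Hurewicz (or equivalently of the preceding corollary together with Hurewicz~$\Rightarrow$~$\theta$-Hurewicz), and it would be appropriate to present the proof as a one-line citation of these earlier results.
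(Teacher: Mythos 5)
Your proposal is correct and matches the paper's intent: the corollary is stated without proof precisely because it follows immediately from the preceding corollary (or theorem) via the implication chain Hurewicz $\Rightarrow$ $\theta$-Hurewicz (and continuity $\Rightarrow$ $\theta$-continuity), exactly as you argue. Your alternative self-contained route through Theorem \ref{CD1} is also valid but adds nothing beyond the one-line citation.
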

\begin{corollary}
	Every continuous image of a nearly Hurewicz space $X$ in $\omega^\omega$ is  bounded.
	
\end{corollary}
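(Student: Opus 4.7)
The plan is to argue by contradiction, transporting the construction of Theorem \ref{CD1} through the continuous map. Let $\phi: X \to \omega^\omega$ be a continuous map with $X$ nearly Hurewicz, and suppose for contradiction that $Y := \phi(X)$ is unbounded in $\omega^\omega$. I will build the same sequence of clopen covers of $Y$ used in the proof of Theorem \ref{CD1}, pull them back to a sequence of clopen covers of $X$, apply the nearly Hurewicz property of $X$, and then push the resulting finite subfamilies forward to derive a contradiction with the unboundedness of $Y$.

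Concretely, for each $y \in Y$ and $n \in \omega$ set $A_n^y = \{h \in Y : h(i) \in \{y(1), \ldots, y(n)\},\ 1 \le i \le n\}$; as shown in Theorem \ref{CD1}, each $A_n^y$ is clopen in $Y$ and $\mathcal{A}_n = \{A_n^y : y \in Y\}$ is a clopen cover of $Y$. Since $\phi$ is continuous, each $\phi^{-1}(A_n^y)$ is clopen in $X$, so $\mathcal{U}_n := \{\phi^{-1}(A_n^y) : y \in Y\}$ is an open cover of $X$ whose members $U$ all satisfy $Int(Cl(U)) = U$. Applying the nearly Hurewicz property of $X$ to $(\mathcal{U}_n)_{n \in \omega}$ yields finite $\mathcal{V}_n \subseteq \mathcal{U}_n$ such that for every $x \in X$, $x \in \bigcup \{Int(Cl(V)) : V \in \mathcal{V}_n\} = \bigcup \mathcal{V}_n$ for all but finitely many $n$. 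Passing to the corresponding finite subfamilies $\mathcal{B}_n := \{A \in \mathcal{A}_n : \phi^{-1}(A) \in \mathcal{V}_n\}$ of $\mathcal{A}_n$, I obtain that every $y \in Y$ lies in $\bigcup \mathcal{B}_n$ for all but finitely many $n$.

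From this point the contradiction with the unboundedness of $Y$ is extracted exactly as at the end of Theorem \ref{CD1}: define $g \in \omega^\omega$ from the (finitely many) defining coordinates $y(1), \ldots, y(n)$ of the $A_n^y \in \mathcal{B}_n$, use unboundedness of $Y$ to produce $y' \in Y$ with $y'(n) > g(n)$ for infinitely many $n$, and observe that such a $y'$ cannot lie in $\bigcup \mathcal{B}_n$ for those $n$, contradicting the conclusion of the previous paragraph. The only substantive point beyond Theorem \ref{CD1} is the trivial observation that continuous preimages of clopen sets are clopen, so that the operator $Int(Cl(-))$ appearing in the nearly Hurewicz condition acts as the identity on this particular pulled-back sequence and the argument collapses to the mildly Hurewicz pattern already exploited in Theorem \ref{CD1}; hence there is no deeper obstacle.
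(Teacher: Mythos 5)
Your argument is correct, but it takes a genuinely different route from the one the paper intends. The paper derives this corollary by chaining already-established facts: nearly Hurewicz $\Rightarrow$ almost Hurewicz $\Rightarrow$ $\theta$-Hurewicz, continuity implies $\theta$-continuity, $\theta$-continuous images of $\theta$-Hurewicz spaces are $\theta$-Hurewicz, and finally Corollary \ref{CD2} (a $\theta$-Hurewicz subspace of $\omega^\omega$ is bounded, itself a consequence of Theorem \ref{CD1}). You instead rerun the combinatorial construction of Theorem \ref{CD1} directly on the image $Y=\phi(X)$ and pull the clopen covers back through $\phi$; the key observation that preimages of clopen sets are clopen, so that $Int(Cl(\cdot))$ acts as the identity on the pulled-back covers, in effect proves the stronger and reusable fact that a continuous image of a nearly Hurewicz space is mildly Hurewicz, after which Theorem \ref{CD1} applies verbatim. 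Your approach is more self-contained and bypasses the $\theta$-continuity machinery entirely; the paper's approach is shorter given its preceding results and keeps the corollary as a one-line consequence. One small point of hygiene: as literally written, $\mathcal{B}_n=\{A\in\mathcal{A}_n:\phi^{-1}(A)\in\mathcal{V}_n\}$ could fail to be finite if infinitely many members of $\mathcal{A}_n$ share the same preimage; you should instead choose, for each $V\in\mathcal{V}_n$, a single $A_V\in\mathcal{A}_n$ with $\phi^{-1}(A_V)=V$ and set $\mathcal{B}_n=\{A_V: V\in\mathcal{V}_n\}$. This is a routine repair and does not affect the validity of the argument.
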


\begin{corollary}
	Every continuous image of  an almost Hurewicz space $X$ in $\omega^\omega$ is  bounded.
	
\end{corollary}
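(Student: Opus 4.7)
The plan is to chain together two results already established in the paper, so the proof should be essentially a one-line reduction. First I would invoke the theorem earlier in Section 2 showing that every almost Hurewicz space is $\theta$-Hurewicz; applied to our almost Hurewicz space $X$, this gives that $X$ is $\theta$-Hurewicz.

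Next I would apply the corollary just proved, namely that every continuous image of a $\theta$-Hurewicz space in $\omega^{\omega}$ is bounded. Given a continuous map $f\colon X \to \omega^{\omega}$, since $X$ is $\theta$-Hurewicz by the previous step, the image $f(X)$ is bounded in $\omega^{\omega}$, which is what we wanted to show.

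Thus the entire proof is just a two-step implication: \emph{almost Hurewicz} $\Rightarrow$ $\theta$-\emph{Hurewicz} (already proved), composed with $\theta$-\emph{Hurewicz with continuous image in} $\omega^{\omega}$ $\Rightarrow$ \emph{bounded} (the preceding corollary, which in turn rests on Theorem \ref{CD1} via the chain $\theta$-Hurewicz $\Rightarrow$ mildly Hurewicz). There is no genuine obstacle here; the statement is a corollary in the strict sense, recorded for completeness alongside the parallel corollaries for Hurewicz, nearly Hurewicz, and $\theta$-Hurewicz spaces. If one preferred not to cite the almost-Hurewicz-implies-$\theta$-Hurewicz theorem, one could instead unfold the argument: given a continuous $f\colon X\to \omega^{\omega}$, pull back the clopen cover $\mathcal A_n=\{A_n^{f_x}:f_x\in f(X)\}$ of $f(X)$ used in the proof of Theorem \ref{CD1}, note that each $Cl(f^{-1}(A_n^{f_x}))\subseteq f^{-1}(A_n^{f_x})$ by continuity and clopenness, and apply almost Hurewicz directly to the open covers $\{f^{-1}(A_n^{f_x})\}$; but the packaged reduction above is cleaner and uses only results already in hand.
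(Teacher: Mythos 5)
Your reduction is correct and matches the paper's intended derivation: the corollary is stated without proof precisely because it follows from the chain almost Hurewicz $\Rightarrow$ $\theta$-Hurewicz (Theorem 2.5) combined with the preceding corollary that continuous images of $\theta$-Hurewicz spaces in $\omega^\omega$ are bounded. Nothing further is needed.
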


\begin{theorem}\label{AB}
	Let $X$ be a $\theta$-Lindelof space. If the cardinality of $X$ is  less than  $\mathfrak{b}$, then $X$ is  $\theta$-Hurewicz 
\end{theorem}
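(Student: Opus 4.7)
The plan is to use the standard bounding-number argument, adapted to the $\theta$-open setting. I would first exploit the $\theta$-Lindelöf hypothesis to reduce each $\theta$-open cover in a given sequence to a countable one, and then encode the resulting ``finite unions suffice'' requirement as a statement about functions in $\omega^\omega$.

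In detail, suppose $(\mathcal{A}_k : k \in \mathbb{N})$ is a sequence of $\theta$-open covers of $X$. For each $k$, since $X$ is $\theta$-Lindelöf, I extract a countable subcover and enumerate it as $\{A_{k,n} : n \in \mathbb{N}\} \subseteq \mathcal{A}_k$. For each $x \in X$, define $f_x : \omega \to \omega$ by
\[
f_x(k) = \min\{n \in \mathbb{N} : x \in A_{k,1} \cup A_{k,2} \cup \cdots \cup A_{k,n}\},
\]
which is well-defined because $\{A_{k,n} : n \in \mathbb{N}\}$ covers $X$. This gives a set $F = \{f_x : x \in X\} \subseteq \omega^\omega$ with $|F| \leq |X| < \mathfrak{b}$.

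By the definition of the bounding number, $F$ must be bounded in $(\omega^\omega, \leq^{*})$, so there exists $g \in \omega^\omega$ with $f_x \leq^{*} g$ for every $x \in X$. For each $k \in \mathbb{N}$, set
\[
\mathcal{B}_k = \{A_{k,1}, A_{k,2}, \ldots, A_{k,g(k)}\},
\]
a finite subset of $\mathcal{A}_k$. For each $x \in X$, the condition $f_x(k) \leq g(k)$ holds for all but finitely many $k$, which means $x \in A_{k,1} \cup \cdots \cup A_{k,g(k)} = \cup \mathcal{B}_k$ for all but finitely many $k$. Hence the sequence $(\mathcal{B}_k : k \in \mathbb{N})$ witnesses that $X$ has the $\theta$-Hurewicz property.

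The argument is essentially routine once the right encoding is chosen; the only point to be careful about is ensuring the enumeration step is valid, which is precisely where the $\theta$-Lindelöf hypothesis is used. I do not expect any genuine obstacle beyond bookkeeping — the substantive input is the definition of $\mathfrak{b}$ applied to the family $\{f_x : x \in X\}$, and the bound $|X| < \mathfrak{b}$ is exactly what makes this family bounded.
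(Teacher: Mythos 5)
Your proof is correct and uses essentially the same argument as the paper: reduce each $\theta$-open cover to a countable one via $\theta$-Lindel\"ofness, encode each point $x$ by the function $f_x(k)=\min\{j : x\in A_{k,j}\}$, and invoke $|X|<\mathfrak{b}$ to bound the family $\{f_x : x\in X\}$. The only difference is presentational -- you argue directly from a bounding function $g$, whereas the paper runs the same encoding inside a proof by contradiction.
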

\begin{proof} Let $X$ be a $\theta$-Lindelof space with  $|X| < \mathfrak{b}$.  If $X$ is not a  $\theta$-Hurewicz space. Then there exists a sequence  $(\mathcal{A}_n : n\in \omega)$  of $\theta$-open covers of $X$ such that for each $n$ and for each finite subset  $\mathcal{B}_n$ of $ \mathcal{A}_n$, there exists a $x\in X$ such that $x\not \in \cup\mathcal{B}_n$ for infinitely many $n.$ Since $X$ is  $\theta$-Lindelof, assume that for each $n$, $\mathcal{A}_n = \{A_n^j : j\in \omega\}.$ For each $x\in X, $ define $f_x : \omega \rightarrow \omega $ as : $f_x(n) = min\{j : x\in A_n^j\}$. Let $D = \{f_x : x\in X\}.$  Then $D$ is an unbounded set. If $D$ is bounded. Then there exists  a $f\in \omega^\omega$ such that $f_x\leq^* f$  for all $f_x\in D$. For $n\in\omega$, put $\mathcal{B}_n = \{A_n^j : j\leq f(n)\}.$ Then for each $x\in X$, $x\in \cup\mathcal{B}_n$ for all but finitely many $n$. This leads to be a contradiction to the fact the there is a $x\in X$ such that $x\not\in \cup\mathcal{B}_n $ for infinitely many $n.$ Thus $D$ is an unbounded set. Hence  $\mathfrak{b}\leq |D|.$   Since $|X| <\mathfrak{b}$ and it is mapped surjectively to $D$. This leads to a contradiction. Hence $X$ must be a $\theta$-Hurewicz space.  
\end{proof}
\begin{theorem}
	Let $X$ be a mildly  Lindelof space. If the cardinality of $X$ is  less than  $\mathfrak{b}$, then $X$ is  mildly Hurewicz 
\end{theorem}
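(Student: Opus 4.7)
The plan is to mimic the proof of Theorem \ref{AB} verbatim, replacing $\theta$-open covers with clopen covers throughout. Since the $\theta$-Lindelöf hypothesis was used only to reduce each $\theta$-open cover in the sequence to a countable one, and since mild Lindelöfness plays precisely the same role for clopen covers, the argument should go through essentially unchanged.

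First I would argue by contradiction: assume $X$ is mildly Lindelöf with $|X| < \mathfrak{b}$ but that $X$ is not mildly Hurewicz. Then there is a sequence $(\mathcal{A}_n : n \in \omega)$ of clopen covers of $X$ such that for every choice of finite $\mathcal{B}_n \subseteq \mathcal{A}_n$ one can find $x \in X$ with $x \notin \bigcup \mathcal{B}_n$ for infinitely many $n$. Applying mild Lindelöfness to each $\mathcal{A}_n$, I may assume without loss of generality that $\mathcal{A}_n = \{A_n^j : j \in \omega\}$ is countable.

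Next, for each $x \in X$ define $f_x : \omega \to \omega$ by $f_x(n) = \min\{j : x \in A_n^j\}$, and set $D = \{f_x : x \in X\}$. I would then show $D$ is unbounded in $(\omega^\omega, \leq^*)$: if some $f \in \omega^\omega$ dominated every $f_x$ eventually, then letting $\mathcal{B}_n = \{A_n^j : j \leq f(n)\}$ would give finite subfamilies of $\mathcal{A}_n$ such that for every $x \in X$, $x \in \bigcup \mathcal{B}_n$ for all but finitely many $n$, contradicting the failure of the mildly Hurewicz property. Consequently $\mathfrak{b} \leq |D| \leq |X| < \mathfrak{b}$, which is the desired contradiction.

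There is no real obstacle here beyond recognising that the argument of Theorem \ref{AB} is formally insensitive to which class of open-like sets is used, so long as that class supports a Lindelöf-type reduction to countable covers. Clopen sets do, via the mildly Lindelöf hypothesis, so the proof transcribes directly. The only small point to check is that the surjection $x \mapsto f_x$ from $X$ onto $D$ really forces $|D| \leq |X|$, which is immediate from the definition.
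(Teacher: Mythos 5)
Your proposal is correct and is exactly the paper's approach: the paper's entire proof of this theorem is the single sentence that it follows the lines of Theorem \ref{AB}, and you have carried out precisely that transcription, replacing $\theta$-open covers by clopen covers and $\theta$-Lindel\"ofness by mild Lindel\"ofness. Nothing further is needed.
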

\begin{proof}
	The proof is on similar lines of the proof of Theorem \ref{AB}.
\end{proof}

\begin{corollary}
	Let $X$ be a subset of real line $\mathbb{R}.$ If $X$ is not $\theta$-Hurewicz, then $|X|\geq \mathfrak{b}$.
	
\end{corollary}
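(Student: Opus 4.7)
The plan is to derive this as the contrapositive of Theorem \ref{AB}, once we verify its hypothesis for any $X\subseteq \mathbb{R}$. Theorem \ref{AB} says that a $\theta$-Lindel\"of space of cardinality less than $\mathfrak{b}$ is $\theta$-Hurewicz; contrapositively, a $\theta$-Lindel\"of space that fails $\theta$-Hurewicz must have cardinality at least $\mathfrak{b}$. So the entire task reduces to checking that every subset $X$ of $\mathbb{R}$ is $\theta$-Lindel\"of.

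First I would recall the definition of a $\theta$-open set: $A$ is $\theta$-open in $X$ iff for every $x \in A$ there is an open $B$ with $x \in B \subset Cl(B) \subset A$. In a regular space this condition is automatic for every open set containing $x$, so in a regular space the $\theta$-open sets coincide with the open sets, and hence $\theta$-Lindel\"of and Lindel\"of are equivalent. Since $\mathbb{R}$ is regular and metrizable, the subspace $X \subseteq \mathbb{R}$ inherits regularity, and the condition $\tau = \tau_\theta$ transfers to $X$.

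Next I would observe that $X$ is second countable (as a subspace of the second countable space $\mathbb{R}$), hence Lindel\"of. Combining with the previous paragraph, $X$ is $\theta$-Lindel\"of. Applying the contrapositive of Theorem \ref{AB} to $X$, we conclude that if $X$ is not $\theta$-Hurewicz, then $|X|\geq \mathfrak{b}$, which is the desired conclusion.

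There is essentially no obstacle; the only step worth being careful about is confirming that the $\theta$-topology coincides with the ordinary topology in a regular space, so that the $\theta$-Lindel\"of hypothesis of Theorem \ref{AB} is supplied for free by second countability of $\mathbb{R}$. Once that is observed, the result is an immediate application of the theorem.
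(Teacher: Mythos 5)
Your proposal is correct and follows the route the paper intends: the corollary is stated as an immediate consequence of Theorem \ref{AB}, with the only content being that every $X\subseteq\mathbb{R}$ is $\theta$-Lindel\"of, which you verify. (In fact you need slightly less than the coincidence of $\tau$ and $\tau_\theta$ in regular spaces: since $\theta$-open sets are always open, Lindel\"ofness of the second countable space $X$ already gives $\theta$-Lindel\"ofness directly.)
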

\begin{corollary}
	Let $X$ be a subset of real line $\mathbb{R}.$ If $X$ is not mildly Hurewicz, then $|X|\geq \mathfrak{b}$.
	
\end{corollary}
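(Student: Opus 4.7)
The plan is to derive this corollary as a direct application of the preceding theorem by contrapositive, once I verify its hypothesis for an arbitrary subset of $\mathbb{R}$. The theorem says that any mildly Lindelof space of cardinality less than $\mathfrak{b}$ is mildly Hurewicz; so it suffices to observe that every $X\subseteq\mathbb{R}$ is mildly Lindelof, and then contrapose.

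First I would recall that $\mathbb{R}$ with its usual topology is second countable, hence hereditarily Lindelof. Therefore any subspace $X\subseteq\mathbb{R}$ is itself Lindelof. Since every clopen cover of $X$ is in particular an open cover, Lindelofness of $X$ forces every clopen cover of $X$ to admit a countable subcover, which is exactly the definition of mildly Lindelof. So $X$ is mildly Lindelof.

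Now I apply the contrapositive of the previous theorem: if $X$ is mildly Lindelof and $|X|<\mathfrak{b}$, then $X$ is mildly Hurewicz. Hence, assuming $X\subseteq\mathbb{R}$ is not mildly Hurewicz, the inequality $|X|<\mathfrak{b}$ must fail, i.e.\ $|X|\geq\mathfrak{b}$, which is the conclusion.

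There is no real obstacle here; the only substantive ingredient beyond the cited theorem is the hereditary Lindelof property of $\mathbb{R}$, which is standard. The proof is therefore essentially a one-line reduction, and I would phrase it in the paper as such, pointing back to the analogue proof of the $\theta$-Hurewicz corollary preceding it.
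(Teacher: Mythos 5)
Your proposal is correct and is exactly the derivation the paper intends: the corollary is stated as an immediate consequence of the preceding theorem (mildly Lindel\"of plus $|X|<\mathfrak{b}$ implies mildly Hurewicz), applied in contrapositive after observing that every subspace of the second countable space $\mathbb{R}$ is Lindel\"of and hence mildly Lindel\"of. No gaps.
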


\textbf{ Remark}:	In \cite{B},  Velicho  defined the  $\theta$-closure operator which is   denoted by  Cl$_\theta$(A). For  $A\subset X$,  Cl$_\theta(A)$ = \{$x$ $\in$
$X$ : for each neighbourhood $U$ of $x$, $Cl(U) \cap A \not = \phi\}$ and Cl(A)$\subseteq$ Cl$_\theta$(A). Many papers have been published on $\theta$-closure operator
(see \cite{CL1, CL2, KD, CL3}).  Using the $\theta$-closure operator it is interesting to investigate the following  class of spaces. A space $X$ is called $\theta$-almost Hurewicz  if for each sequence $(\mathcal{A}_k : {k \in\mathbb{N}})$ of open   covers of $X$ there exists a sequence $(\mathcal{B}_k : {k \in\mathbb{N}}),$ where   $\mathcal{B}_k$ is a ﬁnite subset of $\mathcal{A}_k$  for each  $k$, such that for each $x\in X$, $x\in \bigcup \{Cl_\theta(Cl({B})) : B\in \mathcal{B}_k\}$  for all but finitely many $k.$  Observe that   every  almost Hurewicz spaces is almost $\theta$-Hurewicz. \\ \vspace{1cm}

 \textbf{Conflicts of interests}: The authors have no relevant financial or non-financial interests to disclose.

\end{document}